\newtheorem{thm}{Theorem}
\newtheorem{lem}[thm]{Lemma}
\newtheorem{prop}[thm]{Proposition}
\theoremstyle{definition}
\newtheorem{rem}[thm]{Remark}
\numberwithin{equation}{section}
\newcommand{\Z}{{\mathbb Z}}
\newcommand{\Tr}{{\mathrm{Tr}}}
\newcommand{\diag}{\mathrm{diag}}
\newcommand{\F}{\mathbb{F}}
\newcommand{\M}{\mathbb{M}}
\newcommand{\rvline}{\hspace*{-\arraycolsep}\vline\hspace*{-\arraycolsep}}
\begin{document}

\title{Nonderogatory matrices as sums of $p$-potent and nilpotent matrices}

\author[A. Pojar]{Andrada Pojar}
\address{Technical University of Cluj-Napoca, Department of Mathematics, Str. Memorandumului 28, 400114, Cluj-Napoca, Romania}
\email{andrada.pojar@math.utcluj.ro}

\begin{abstract}
We prove that every nonderogatory matrix $A$ over a field of positive odd characteristic $p,$ that is sum of a $p$-potent matrix and a nilpotent matrix, has a decomposition $A=E+V,$ such that $E^p=E,$ and $V^3=0.$
\end{abstract}

\keywords{ nonderogatory matrix; companion matrix; nilpotent matrix; $p$-potent matrix }

\subjclass[2010]{15A24, 15A83, 16U99}

\maketitle

\section{Introduction}

Let $\F$ be a field with positive odd characteristic $p.$
As usual, the letter $\F_p=\mathbb{Z}_p$ will stand for the prime field of $p$ elements having characteristic $p$, for any positive integer $n$, the notation $\mathbb{M}_n(\F)$ will denote the full matrix ring of $n\times n$ matrices over $\F,$
 and $\mathbb{M}_{m,n}(\F)$ will denote the full matrix ring of  matrices over $\F,$ with $m$ rows and $n$ columns.

A square matrix is {\it nonderogatory} if its characteristic
and minimal polynomials coincide. A matrix is nonderogatory if and only if it is similar to a
companion matrix $C$.
We recall that a companion matrix $C\in M_n(\F)$ is a matrix of the form
$$C=C_{c_0,c_1,\ldots, c_{n-1}}=\left(\begin{array}{ccccc}
0 & 0  &\ldots & 0 & -c_0 \\
1 & 0  &\ldots & 0 & -c_1\\
\vdots & \vdots  &\cdots  & \vdots & \vdots \\
0 & 0 &\ldots & 1 & -c_{n-1}
 \end{array}\right).$$

A square matrix $E$ over $\F$ is $p$-potent if $E^p=E.$ A square matrix $A$ over $\F$ is nil-clean if there exist an idempotent $E$ and a nilpotent $N$ such that $A=E+N.$ We consider decompositions of nonderogatory matrices $A$ such that there exist a $p$-potent $E$ and a nilpotent $N,$ with $A=E+N.$

Nil-clean decompositions are related to clean ones, introduced by Nicholson in \cite{N}, when investigating exchange rings and were first studied by Diesl in \cite{Diesl}. An important result appeared in \cite{BCDM} and \cite{KLZ} about them is: every $n\times n$ matrix over a division ring $D$ is nil-clean if and only if $D=\F_2.$ A theme related to a set $S$ of nilpotents of a ring is the study of boundness of the nilpotence index of S--the existence of a positive integer $n$ such that $a^n=0$ for all $a\in S$ , and it was first studied first in \cite{KWZ}. \v{S}ter has proved in \cite{S} that there exists a nil-clean decomposition of every matrix of $\mathbb{M}_n(\F_2),$ with nilpotent $N$ having nilpotence index at most $4$ (i.e $N^4=0.$). Moreover this result has sharpness, that is there exist $4\times 4$ companion matrices over $\F_2,$ which cannot be decomposed as a sum of an idempotent and a nilpotent matrix of nilpotence index at most $3.$

An extension of nil-clean decompositions to finite fields of odd cardinality $q,$ was done in \cite{AM}: every matrix over such a field is the sum of a $q$-potent $E=E^q$ and a nilpotent. In \cite{B} there is even more -- the nilpotent $N$ involved in such a decomposition can be with nilpotence index at most $3$ (i.e. $N^3=0).$ Decompositions as a sum of a potent and square-zero matrix were considered in \cite{DGGL}.

In \cite{BM} it has been proved that if $\F$ is a field of positive characteristic, $p,$ then a companion matrix $A\in \mathbb{M}_n(\F)$ is nil-clean if and only if $A$ is nilpotent or unipotent or the trace of $A$ is of the form $t\cdot 1,$ with $t\in \{1,2,\dots,p\}$ such that $n>t$.

The characterization of companion matrix we are interested in is in the following Theorem:

\begin{thm}\cite{PComp}\label{thmPcomp}
Let $\F$ be a field with positive odd characteristic $p.$ Let $n\geq 1$ be a positive integer. Fix constants $c_0, c_1,\dots,c_{n-1}\in \F.$
Let $C$ be a companion matrix. Then $C$ is sum of a $p$-potent and a nilpotent if and only if  the trace of $C$ is an integer multiple of unity of $\F.$
\end{thm}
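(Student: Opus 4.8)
The plan is to prove the two implications separately, the forward one by an eigenvalue count and the converse by writing down explicit summands directly on the companion form of $C$. Throughout, let $f(x)=x^{n}+c_{n-1}x^{n-1}+\dots+c_{0}$ be the common characteristic and minimal polynomial of $C$, so that $\Tr C=-c_{n-1}$, and recall that ``integer multiple of unity'' means exactly an element of the prime subfield $\F_{p}\cdot 1_{\F}=\{0,1_{\F},\dots,(p-1)1_{\F}\}$, these $p$ elements being distinct because $\F$ has characteristic $p$.

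For necessity, suppose $C=E+N$ with $E^{p}=E$ and $N$ nilpotent. The minimal polynomial of $E$ divides $x^{p}-x=\prod_{j=0}^{p-1}(x-j\cdot 1_{\F})$, which splits into \emph{distinct} linear factors over $\F$; hence $E$ is diagonalizable over $\F$ with every eigenvalue in $\F_{p}\cdot 1_{\F}$, so $\Tr E\in\F_{p}\cdot 1_{\F}$. Since every nilpotent matrix has zero trace, $\Tr C=\Tr E+\Tr N=\Tr E$ is an integer multiple of unity. I expect this direction to be completely routine.

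For sufficiency, assume $\Tr C=-c_{n-1}=m\cdot 1_{\F}$ and fix $m\in\{0,1,\dots,p-1\}$. The substance of the proof is guessing the $p$-potent summand; once it is on the table, verifying $E^{p}=E$ and the nilpotency of $N:=C-E$ is a one-line check, so the only real decision is a case split on whether $m=0$. If $m\neq 0$, I would let $N$ consist of the subdiagonal $1$'s of $C$ and nothing else (so $N^{n}=0$) and set $E:=C-N$; then $E$ has all columns zero except the last, which equals $u:=(-c_{0},\dots,-c_{n-1})^{T}$, i.e. $E=u\,e_{n}^{T}$ with $e_{n}^{T}u=-c_{n-1}=m\cdot 1_{\F}\neq 0$. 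A rank-one matrix $uv^{T}$ with $v^{T}u\neq 0$ is diagonalizable, with eigenvalue $v^{T}u$ on $\F u$ and eigenvalue $0$ on the hyperplane $\ker v^{T}$, and here both eigenvalues lie in $\F_{p}\cdot 1_{\F}$; hence $E^{p}=E$ and $C=E+N$ is the desired decomposition (this also settles $n=1$, where $N=0$ and $E=(m\cdot 1_{\F})$).

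If $m=0$ and $n=1$, then $c_{0}=0$ and $C=0=0+0$. If $m=0$ and $n\geq 2$, then $c_{n-1}=0$, and I would take $E$ to be the matrix that interchanges $e_{n-1}$ and $e_{n}$ and annihilates $e_{1},\dots,e_{n-2}$ (a block $\left(\begin{smallmatrix}0&1\\1&0\end{smallmatrix}\right)$ in the lower-right corner, zeros elsewhere). Then $E^{2}=0_{n-2}\oplus I_{2}$, so $E^{3}=E$ and, $p$ being odd, $E^{p}=E$; equivalently $E$ is diagonalizable with eigenvalues $0,\dots,0,1,-1$. For $N:=C-E$ one checks that $N$ maps the hyperplane $W=\mathrm{span}(e_{1},\dots,e_{n-1})$ into itself, acting there as the nilpotent shift $e_{1}\mapsto e_{2}\mapsto\dots\mapsto e_{n-1}\mapsto 0$, while also $Ne_{n}\in W$; hence $N(\F^{n})\subseteq W$ and $N^{n}=N^{n-1}\bigl(N(\F^{n})\bigr)\subseteq (N|_{W})^{n-1}(W)=0$, so $N$ is nilpotent and $C=E+N$. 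The conceptual content of the case split is that a nonzero class $m$ can be realized as the lone nonzero eigenvalue of a rank-one $p$-potent matrix, whereas a vanishing trace must be produced either by $C$ already being nilpotent or by a $\{+1,-1\}$ pair of eigenvalues carried by the transposition block. I would finally note that these witnesses have nilpotent part of index up to $n$, so the strengthening to index $3$ promised in the abstract is not obtained for free here and requires genuinely more economical choices of $E$.
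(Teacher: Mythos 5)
Your argument is correct, and it is worth noting that this paper does not actually prove Theorem~\ref{thmPcomp}: the statement is imported from the preprint \cite{PComp}, so there is no in-text proof to compare against line by line. Both directions of your proof check out: the necessity argument (minimal polynomial of $E$ divides $X^p-X$, which splits with distinct roots in the prime field, so $\Tr E$ is an integer multiple of unity, and $\Tr N=0$) is the standard one; for sufficiency, your rank-one decomposition $C=u e_n^{T}+N$ with $N$ the subdiagonal shift works because $e_n^{T}u=-c_{n-1}\neq 0$ makes $ue_n^{T}$ diagonalizable with spectrum $\{m\cdot 1,0\}\subseteq\F_p\cdot 1$, and in the trace-zero case the transposition block $\bigl(\begin{smallmatrix}0&1\\1&0\end{smallmatrix}\bigr)$ in the corner gives a $p$-potent $E$ (here oddness of $p$ is genuinely used, since $E^2\neq E$), while your invariant-subspace computation showing $N(\F^n)\subseteq\mathrm{span}(e_1,\dots,e_{n-1})$ and $Ne_{n-1}=0$ correctly yields $N^n=0$. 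Your route differs from the machinery this paper builds on: the tools quoted from \cite{PComp} and \cite{Pojar} (Proposition~\ref{PrescrDecomp}, Lemma~\ref{use1}, Lemma~\ref{ppotent}) indicate that the source proof proceeds by similarity to a companion matrix perturbed by a prime-field diagonal and by decompositions with prescribed characteristic polynomial for the nilpotent part--heavier apparatus, but it is exactly what the present paper needs to control the block structure of $E$ and push the nilpotence index down to $3$. Your explicit witnesses are more elementary and self-contained but only give nilpotence index up to $n$, a limitation you correctly acknowledge; so your proof establishes Theorem~\ref{thmPcomp} as stated, while the cited approach buys the quantitative refinements exploited in Lemmas~\ref{ppotent}--\ref{mainLem} and Theorem~\ref{MainThm}.
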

In \cite{BrMe} it has been proved that every nil-clean nonderogatory $n\times n$ matrix $A,$ over a field of positive characteristic, $p,$ with trace of $A$ not equal to $1,$ can be written as $A=E+V,$ such that $E^2=E,$ and $V^{p+1}=0.$ If $\Tr(A)=1,$ then there is a similar decomposition with $V^{p+2}=0.$ We prove that every nonderogatory matrix $A$ over a field of positive odd characteristic $p,$ that is sum of a $p$-potent and a nilpotent,
 has decomposition $A=E+V,$ such that $E^p=E,$ and $V^3=0.$ Moreover, this result has sharpness, that is there exists a $3\times 3$ companion matrix over a field of characteristic $3,$ which is sum of a tripotent and a nilpotent, but cannot be decomposed as a sum of a tripotent and a nilpotent matrix of nilpotence index at most $2.$

\section{Useful tools}

The following Proposition has been proved for $\F_p$ in \cite{Pojar}. The same proof is valid for any field with with positive odd characteristic $p.$

\begin{prop}\label{PrescrDecomp}
Let $\F$ be a field with positive odd characteristic $p.$
Let $n\geq 2$ and $k$ be positive integers such that $k<n.$ Let $a\in \{1,2,\dots,p-2\}.$ Fix constants $d_0, d_1,\dots,d_{n-1}\in \F$ and denote $D=C_{d_0,d_1,\ldots,d_{n-1}}+\diag (\underbrace{a,\dots,a }_{k \textrm{-times}}  ,0,\dots,0).$ For every polynomial $g\in \F[X]$ of degree at most $n-2$ there exist two matrices $E,$ $M$ in $\mathbb{M}_n(\F)$ such that:
\begin{enumerate}
\item $D=E+M$
\item $E^p=E$, and
\item $\chi_M=X^n+(k\cdot 1_{\F}+d_{n-1})X^{n-1}+g.$
\end{enumerate}
\end{prop}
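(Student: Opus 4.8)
The plan is to look for the decomposition in the form $M=D-E$, where $E$ is chosen to be \emph{upper triangular with nonzero off-diagonal entries only in the last column}. Write $\mu_1,\dots,\mu_n$ for the main diagonal of $E$ and $f_1,\dots,f_{n-1}$ for the entries of its last column above the diagonal. Comparing entries, one checks that $M=D-E$ is then automatically of the form
\[
M=C_{m_0,\dots,m_{n-1}}+\diag(\nu_1,\dots,\nu_n),
\]
with $\nu_i=a-\mu_i$ for $i\le k$, $\nu_i=-\mu_i$ for $k<i\le n$, and $m_{n-1}=d_{n-1}$, $m_{i-1}=d_{i-1}+f_i$ for $1\le i\le n-1$ (the $(n,n)$ entry being split as $-m_{n-1}+\nu_n=-d_{n-1}-\mu_n$). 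Moreover $D=E+M$ holds for \emph{every} choice of the parameters $\mu_i$ and $f_i$. So the problem reduces to choosing these parameters so that $E^p=E$ and $\chi_M$ is the prescribed polynomial.

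To handle $\chi_M$, I would first record the characteristic polynomial of a ``companion-plus-diagonal'' matrix $N=C_{m_0,\dots,m_{n-1}}+\diag(\nu_1,\dots,\nu_n)$. Since $e_1$ is a cyclic vector (one has $(N-\nu_j)\cdots(N-\nu_1)e_1=e_{j+1}$ for $1\le j\le n-1$, and then $(N-\nu_n)e_n=-\sum_{j=0}^{n-1}m_je_{j+1}$ upon substitution of $e_{j+1}=\prod_{i=1}^{j}(N-\nu_i)e_1$ gives the relation), one obtains
\[
\chi_N(X)=\prod_{i=1}^{n}(X-\nu_i)+\sum_{j=0}^{n-1}m_j\prod_{i=1}^{j}(X-\nu_i).
\]
In particular the coefficient of $X^{n-1}$ in $\chi_M$ equals $m_{n-1}-\sum_{i=1}^{n}\nu_i=d_{n-1}-ka\cdot 1_\F+\sum_{i=1}^{n}\mu_i$, which is the required value $k\cdot 1_\F+d_{n-1}$ precisely when $\sum_{i=1}^{n}\mu_i=k(a+1)\cdot 1_\F$; I impose this condition. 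Once the $\mu_i$ are fixed, the displayed formula shows that for each $r\ge 1$ the coefficient of $X^{n-1-r}$ in $\chi_M$ is an affine function of $m_{n-1},\dots,m_{n-1-r}$ in which $m_{n-1-r}$ appears with coefficient $1$. Hence, starting from $m_{n-1}=d_{n-1}$, one can solve recursively and uniquely for $m_{n-2},m_{n-3},\dots,m_0$ so that $\chi_M=X^{n}+(k\cdot 1_\F+d_{n-1})X^{n-1}+g$; this determines the $f_i=m_{i-1}-d_{i-1}$, and hence $E$ and $M$.

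What remains — and this is the main obstacle — is to choose the $\mu_i$ so that the matrix $E$ produced above is $p$-potent. As $E$ is upper triangular, its eigenvalues are exactly $\mu_1,\dots,\mu_n$, so I take all $\mu_i$ in the prime field $\F_p$; then $E^p=E$ is equivalent to $E$ being diagonalizable. Because the off-diagonal part of $E$ lies only in column $n$, a short eigenspace computation shows that $E$ is diagonalizable as soon as the value $\mu_n$ is attained exactly once among $\mu_1,\dots,\mu_n$ (for $\lambda\ne\mu_n$ the equation $(E-\lambda)x=0$ forces $x_n=0$, giving $\dim\ker(E-\lambda)$ equal to the multiplicity of $\lambda$). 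If $k(a+1)\not\equiv 0\pmod p$, take $\mu_1=\dots=\mu_{n-1}=0$ and $\mu_n=k(a+1)\cdot 1_\F\ne 0$. If $k(a+1)\equiv 0\pmod p$ — which, since $1\le a+1\le p-1$, happens exactly when $p\mid k$ — take $\mu_1=\dots=\mu_{n-2}=0$, $\mu_{n-1}=1$, $\mu_n=-1$; here $p$ odd guarantees $-1\notin\{0,1\}$. In both cases $\sum_{i}\mu_i=k(a+1)\cdot 1_\F$ and $\mu_n$ is a nonzero diagonal value occurring only in position $n$, so $E$ is diagonalizable with spectrum contained in $\F_p$, i.e. $E^p=E$, while $D=E+M$ and $\chi_M$ is as prescribed by construction. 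The case $p\mid k$ is the delicate one to write out: there the trace $k(a+1)\cdot 1_\F$ of $E$ vanishes, the naive attempt of taking $E=D-C$ with $C$ the companion matrix of the target polynomial may fail to be semisimple (the last column of $E$ then builds a Jordan block), and one genuinely needs the perturbed diagonal $(0,\dots,0,1,-1)$ — together with re-solving the triangular system for the $m_j$ — to restore diagonalizability without spoiling the trace.
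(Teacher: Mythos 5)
Your argument is correct: the formula $\chi_N(X)=\prod_{i=1}^{n}(X-\nu_i)+\sum_{j=0}^{n-1}m_j\prod_{i=1}^{j}(X-\nu_i)$ for a companion-plus-diagonal matrix is right (the $e_1$-cyclicity argument is sound), the trace condition $\sum_i\mu_i=k(a+1)\cdot 1_\F$ is the correct normalization, the triangular recursion for $m_{n-2},\dots,m_0$ works because $m_{n-1-r}$ enters the $X^{n-1-r}$ coefficient with coefficient $1$, and your diagonalizability criterion (the value $\mu_n$ occurring exactly once on the diagonal forces geometric = algebraic multiplicity for every eigenvalue, independently of the $f_i$) together with eigenvalues in $\F_p$ does give $E^p=E$; the two choices $(0,\dots,0,k(a+1))$ and $(0,\dots,0,1,-1)$ cover both cases. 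Note, however, that the paper does not reprove this Proposition at all — it cites \cite{Pojar} and only asserts the proof transfers to arbitrary $\F$ of characteristic $p$ — and, judging from how Lemma \ref{ppotent} later invokes ``the proof of Proposition \ref{PrescrDecomp},'' the construction there produces a $p$-potent of the block shape $\left(\begin{smallmatrix}(a+1)I_k & * \\ 0 & 0\end{smallmatrix}\right)$, i.e.\ with the nonzero diagonal spread over the first $k$ positions (which is also why the hypothesis $a\le p-2$ is needed there, to keep $a+1\ne 0$). Your $E$ instead concentrates everything in the last column with at most two nonzero diagonal entries; this proves the Proposition as stated (indeed you never really need $a\le p-2$), but it is not a drop-in replacement for the paper's later use, since Lemma \ref{ppotent} extracts the specific block form of $E$ from the original proof rather than from the statement. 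So: correct, self-contained, and genuinely different in the shape of the $p$-potent summand; the cited construction buys the structural form needed downstream, yours buys a shorter and slightly more general existence argument.
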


The following lemma provides a tool  to obtain certain nonderogatory matrices.

\begin{lem}\label{use1}\cite{PComp}
 Let $k$ and $n$ be nonzero natural numbers, $n\geq k,$ and $\F$ be a field. Let $a_1,a_2,\dots,a_k$ be integer multiples of the unity. Every companion matrix $C\in \M_n(\mathbb{F})$ is similar to  $C'+\diag (a_1,a_2,\dots,a_k,0,\dots,0)$ for some companion matrix $C'\in \M_n(\mathbb{F}).$
\end{lem}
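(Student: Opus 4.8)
The plan is to use two standard facts recalled in the introduction: a matrix is similar to a companion matrix exactly when it is nonderogatory, and two nonderogatory matrices are similar if and only if they have the same characteristic polynomial (each being similar to the companion matrix of that common polynomial). So it suffices to produce a companion matrix $C'$ for which $M := C' + \diag(a_1,\dots,a_k,0,\dots,0)$ is nonderogatory and has the same characteristic polynomial as $C$.

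First I would show that $M$ is nonderogatory for \emph{every} choice of $C'$, by exhibiting $e_1$ as a cyclic vector. Writing $D=\diag(a_1,\dots,a_k,0,\dots,0)$ and extending by $a_i:=0$ for $i>k$, the companion structure of $C'$ together with the diagonal gives $M e_i = a_i e_i + e_{i+1}$ for $1\le i\le n-1$; the last column of $M$ is immaterial here since it is only invoked by $M e_n$. Consequently $e_{i+1}=(M-a_i I)e_i$, so inductively $e_i=q_i(M)e_1$ with $q_i(X):=\prod_{j=1}^{i-1}(X-a_j)$, and $e_1,Me_1,\dots,M^{n-1}e_1$ span $\F^n$. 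Thus $e_1$ is cyclic and $M$ is nonderogatory, independently of the entries of $C'$.

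Next I would compute the characteristic polynomial of $M$ as a function of the free parameters $c'_0,\dots,c'_{n-1}$ of $C'$. Expanding the last column $M e_n$ in the basis $e_1,\dots,e_n$ and substituting $e_i=q_i(M)e_1$, one reads off the relation $\big(Mq_n(M)+\sum_{i=1}^n c'_{i-1}q_i(M)\big)e_1=0$; since $e_1$ is cyclic and the bracketed polynomial is monic of degree $n$, it must be the characteristic polynomial, giving $\chi_M(X)=Xq_n(X)+\sum_{i=1}^n c'_{i-1}q_i(X)$. Because the $q_i$ are monic of the distinct degrees $0,1,\dots,n-1$, they form a basis of the polynomials of degree at most $n-1$; hence $\sum_{i=1}^n c'_{i-1}q_i$ can be made equal to any such polynomial, and adding the fixed monic term $Xq_n(X)$ lets $\chi_M$ realize any prescribed monic polynomial of degree $n$. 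In particular there is a choice of $C'$ with $\chi_M=\chi_C$, and then $M$ and $C$ are nonderogatory with equal characteristic polynomials, hence similar, which is exactly the claim.

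The only point requiring care is the boundary case $k=n$, where the diagonal perturbation also lands in the $(n,n)$-entry, i.e.\ inside the last column. There $M e_n$ acquires an extra term $a_n e_n = a_n q_n(M)e_1$, which merely shifts the coefficient of $q_n$ in the formula for $\chi_M$; since that coefficient remains freely adjustable through $c'_{n-1}$, the surjectivity conclusion is unchanged. I expect this bookkeeping, together with verifying the exact coefficient formula for $\chi_M$, to be the main (though routine) computational obstacle; note that the argument in fact uses nothing about the $a_i$ beyond their membership in $\F$.
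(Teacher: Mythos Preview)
Your argument is correct. The polynomials $q_i(X)=\prod_{j<i}(X-a_j)$ are exactly the right objects, and the computation of $\chi_M$ together with the basis property of the $q_i$ cleanly delivers the existence of $C'$.

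The paper does not reprove this lemma (it is quoted from \cite{PComp}), but the Remark immediately following it tells you how the cited proof is organized: one builds a new basis $f_1=e_1$, $f_{i+1}=(C-a_iI)f_i$, so that $f_i=q_i(C)e_1$, and then reads off that the matrix of $C$ in this basis is $C'+\diag(a_1,\dots,a_k,0,\dots,0)$. Your approach is essentially the mirror image: you stay in the standard basis, study $M=C'+D$ instead of $C$, recover $e_i=q_i(M)e_1$, and then match characteristic polynomials to conclude similarity \emph{abstractly}. The underlying combinatorics are identical; the difference is only whether the change of basis is made explicit.

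That difference does matter for the rest of the paper, however. Remark~\ref{PUpperTr} records that the transition matrix $P$ realising $C=P(C'+D)P^{-1}$ is upper triangular with $1$'s on the diagonal, and this shape is used in the proof of Lemma~\ref{ppotent} to propagate the block form of the $p$-potent. Your abstract similarity argument does not hand you $P$ directly; to recover it you would have to note that the basis $e_1, Me_1,\dots,M^{n-1}e_1$ (equivalently, the inverse of your relations $e_i=q_i(M)e_1$) realises the conjugation, and that this matrix is upper triangular unipotent. So your proof is fine for the lemma as stated, and your closing observation that the $a_i$ need only lie in $\F$ is correct, but if you intend to feed the result into the later arguments you should also extract the explicit triangular $P$.
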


\begin{rem}\label{PUpperTr}
The transition matrix $P,$ from the canonical basis of $\F$ to the basis in the proof of Lemma \ref{use1},
is upper triangular, with all entries on the main diagonal equal to $1.$
\end{rem}

\begin{lem}\label{ppotent}
Let $n$ be a positive integer, $n>1,$ $p$ be an odd prime number, $k\in \{1,2,\ldots,n-1\},$ $a\in \{2,3,\dots, p-1\},$ $\F$ be a field with characteristic $p,$ and $C\in \mathcal{M}_n(\F)$ a companion matrix. If $\mbox{ }\Tr(C)=t\cdot 1,$ with $t \in \{1,2,\dots,p-1\},$
 then there exists a $p$-potent matrix of the form $$L=\left(\begin{array}{cc}
a\cdot I_k  & L_{1,2} \\
0 & 0
\end{array}\right)$$
such that $C-L$ is nilpotent.
\end{lem}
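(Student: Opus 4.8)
The plan is to make the $p$-potency of $L$ cost nothing and to reduce everything else to one polynomial identity over $\F$. First, for \emph{any} block $L_{1,2}\in\M_{k,n-k}(\F)$ an easy induction gives
\[
L^{j}=\left(\begin{smallmatrix} a^{j}I_k & a^{j-1}L_{1,2}\\ 0 & 0\end{smallmatrix}\right)\qquad(j\ge 1),
\]
and since $a\in\{2,\dots,p-1\}$ is a unit of $\F_p$ we have $a^{p}=a$ and $a^{p-1}=1$ in $\F$, so $L^{p}=L$ regardless of $L_{1,2}$. Hence it suffices to choose $L_{1,2}$ so that $C-L$ is nilpotent, i.e.\ so that $\chi_{C-L}(X)=X^{n}$.

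Partition $C$ into blocks of sizes $k$ and $n-k$. Reading the companion matrix off, the $(1,1)$ block is the $k\times k$ nilpotent Jordan block $J$ (ones on the subdiagonal), the $(2,1)$ block is $E:=e_1e_k^{T}$ (the single subdiagonal entry $(k+1,k)$ of $C$), and the $(2,2)$ block is again a companion matrix $C''=C_{c_k,\dots,c_{n-1}}\in\M_{n-k}(\F)$; the $(1,2)$ block of $C$ is immaterial since it is absorbed by $L_{1,2}$. Writing $B$ for the $(1,2)$ block of $C-L$ --- which runs over all of $\M_{k,n-k}(\F)$ as $L_{1,2}$ does --- we get $C-L=\left(\begin{smallmatrix} J-aI_k & B\\ E & C''\end{smallmatrix}\right)$. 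Now compute $\chi_{C-L}(X)=\det\bigl(XI_n-(C-L)\bigr)$ via the Schur complement with respect to $XI_{n-k}-C''$ (invertible over $\F(X)$); using that $E$ is rank one, the matrix determinant lemma, and the classical formula for the resolvent $(XI_{n-k}-C'')^{-1}e_1$ of a companion matrix, the computation collapses to
\[
\chi_{C-L}(X)=(X+a)^{k}\,\chi_{C''}(X)-\sum_{j=1}^{k}(X+a)^{j-1}r_j(X),
\]
where $r_1,\dots,r_k\in\F[X]$ can be prescribed \emph{arbitrarily} subject only to $\deg r_j\le n-k-1$: the rows of $B$ correspond bijectively to such tuples $(r_j)$ through the basis of $\F[X]_{\le n-k-1}$ formed by the numerators of the entries of $(XI_{n-k}-C'')^{-1}e_1$.

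It remains to solve $\sum_{j=1}^{k}(X+a)^{j-1}r_j(X)=(X+a)^{k}\chi_{C''}(X)-X^{n}$ with $\deg r_j\le n-k-1$. The right side is a polynomial of degree $\le n-1$, and as the $r_j$ vary the left side runs over all of $\F[X]_{\le n-2}$ --- it contains a polynomial of degree exactly $d$ for every $d\in\{0,\dots,n-2\}$, e.g.\ $(X+a)^{j-1}X^{\,d-j+1}$ for any admissible $j$, and here the condition $k\le n-1$ guarantees that an admissible $j$ always exists. Therefore the equation is solvable if and only if the right side lies in $\F[X]_{\le n-2}$, i.e.\ iff the coefficient of $X^{n-1}$ in $(X+a)^{k}\chi_{C''}(X)$ is $0$; that coefficient equals $ka\cdot 1_{\F}+c_{n-1}$, and since $\Tr(C)=-c_{n-1}$ it vanishes precisely when $\Tr(C)=ka\cdot 1_{\F}$ --- which is exactly the trace condition in force. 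Picking such $r_j$, reading off $B$, setting $L_{1,2}=(\text{top-right block of }C)-B$, and using the first paragraph, produces the required $L$.

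The $p$-potency and the block description of $C$ are routine; the substance is the Schur-complement reduction to the displayed identity --- getting the companion resolvent and, above all, the exact parametrization of $\chi_{C-L}$ by the free polynomials $r_j$ correct --- and then the bookkeeping that the only obstruction is the coefficient of $X^{n-1}$, i.e.\ exactly the hypothesis on $\Tr(C)$. I expect that last step, together with pinning down the admissible degree ranges so that $\F[X]_{\le n-2}$ is hit exactly, to be the most delicate part. As an alternative packaging one may first use Lemma~\ref{use1} with $a_1=\dots=a_k=a$ and Remark~\ref{PUpperTr} to conjugate $C$ to $C_1+(aI_k\oplus 0_{n-k})$ by an upper-triangular matrix with unit diagonal, run the same Schur-complement computation there, and conjugate back, noting that conjugation by such a matrix preserves the block shape $\left(\begin{smallmatrix}aI_k&\ast\\0&0\end{smallmatrix}\right)$.
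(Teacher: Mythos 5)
Your construction itself is sound and genuinely different from the paper's. You prove $p$-potency of $L$ directly from the block shape (correct, since $a^{p}=a$ and $a^{p-1}=1$ for $a\in\{2,\dots,p-1\}$), and then parametrize $\chi_{C-L}$ by the free block via a Schur complement and the companion resolvent; the resulting identity $\chi_{C-L}=(X+a)^{k}\chi_{C''}-\sum_{j}(X+a)^{j-1}r_j$ with the $r_j$ ranging freely over $\F[X]_{\le n-k-1}$ is correct (the numerators of $(XI_{n-k}-C'')^{-1}e_1$ are monic of degrees $n-k-1,\dots,1,0$, hence a triangular basis), and your spanning argument for $\F[X]_{\le n-2}$ works. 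The paper instead conjugates $C$, via Lemma~\ref{use1} and Remark~\ref{PUpperTr}, to $\diag(a-1,\dots,a-1,0,\dots,0)+C'$ and invokes the proof of Proposition~\ref{PrescrDecomp} with prescribed characteristic polynomial $X^n$, then conjugates back, noting that conjugation by an upper unitriangular matrix preserves the shape $\left(\begin{smallmatrix}aI_k&\ast\\0&0\end{smallmatrix}\right)$ --- essentially the ``alternative packaging'' you mention at the end. Your route is more self-contained, at the cost of redoing by hand what Proposition~\ref{PrescrDecomp} already encapsulates.

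There is, however, a genuine gap at the last step: your solvability criterion is $\Tr(C)=ka\cdot 1_{\F}$, and you then assert this ``is exactly the trace condition in force.'' It is not: the hypothesis only says $\Tr(C)=t\cdot 1$ with $t\in\{1,\dots,p-1\}$, with $k$ and $a$ given separately, and nothing ties $t$ to $ka$. Indeed $\Tr(L)=ka\cdot 1$, so nilpotence of $C-L$ \emph{forces} $t\cdot 1=ka\cdot 1$; for fixed arbitrary $k,a$ the conclusion is simply unattainable when $t\not\equiv ka\pmod p$, so no proof of the literal universally quantified statement exists. The paper's proof resolves this by treating $k$ and $a$ as to be chosen: it begins by producing $k\in\{1,\dots,n-1\}$ and $a\in\{2,\dots,p-1\}$ with $\Tr(C)=ka$ (for $t\ne 0$ take $k\not\equiv t$ and $a=k^{-1}t$, etc.), i.e.\ it proves the existential version that is actually used later. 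To close your argument you must either add this selection of $k$ and $a$ at the outset, or state explicitly that your proof establishes the lemma exactly under the (necessary) hypothesis $t\equiv ka\pmod p$; with that amendment the rest of your computation goes through.
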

\begin{proof}
Let $C\in \mathcal{M}_n(\F)$ be a companion matrix, with $\Tr(C)=t.$ We are going to prove that there exist $k\in \{1,2,\dots,n-1\}$ and $a\in \{2,3,\dots,p-1\}$ such that $\Tr(C)=ka.$ If $n\leq p,$ then $t\neq 0,$ and  we choose $k\neq t,$ and $a=k^{-1}\cdot t$; if $p<n$ for $t\neq 0$ we make the same choice of $k,$ and $a;$ if $t=0,$ we choose $k=p,$ and $a=2.$

 By Lemma \ref{use1} there exists a companion matrix $C'$ such that
$$C=P(\diag (\underbrace{a-1,\dots,a-1}_{k \textrm{-times}} ,0,\dots,0)+C')P^{-1},$$ where $P$ is the transition matrix we stated in Remark \ref{PUpperTr} that it is upper triangular, with all entries on the main diagonal equal to $1.$ Since $\Tr(C')=(ka-k(a-1))\cdot 1=k\cdot 1,$ it follows by the proof of Proposition \ref{PrescrDecomp} that
there exists a $p$-potent matrix of the form $$U=\left(\begin{array}{cc}
a\cdot I_k  & U_{1,2} \\
0 & 0
\end{array}\right)$$
such that $\diag (\underbrace{a-1,\dots,a-1 }_{k \textrm{-times}} ,0,\dots,0)+C'-U=B,$ with $B$ nilpotent.
Therefore $C=P(U+B)P^{-1}.$ Since $PUP^{-1}$ has the form of $L=\left(\begin{array}{cc}
a\cdot I_k  & L_{1,2} \\
0 & 0
\end{array}\right),$ and because $PUP^{-1}$ is $p$-potent, $PBP^{-1}$ is nilpotent, the conclusion is true.
\end{proof}

\begin{lem}\label{anotherPpotent}
Let $p$ be an odd prime, $\F$ be a field of characteristic $p,$ $n$ a positive integer, $a$ an integer multiple of unity and $L$ an $n\times n$ matrix with entries in $\F,$ such that $L^p=L,$ then $(aI_n-L)^p=aI_n-L.$
\end{lem}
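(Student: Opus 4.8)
The plan is to reduce the statement to two facts: that the Frobenius endomorphism is additive on commuting matrices, and that $a^{p}=a$ for every integer multiple $a$ of unity. First I would observe that $aI_{n}$ is central in $\M_{n}(\F)$, so in particular $aI_{n}$ and $-L$ commute. For commuting elements $x,y$ of a ring of characteristic $p$, the binomial theorem gives $(x+y)^{p}=\sum_{j=0}^{p}\binom{p}{j}x^{j}y^{p-j}$, and every intermediate coefficient $\binom{p}{j}$ with $0<j<p$ is divisible by $p$, hence vanishes in $\F$; thus $(x+y)^{p}=x^{p}+y^{p}$. Applying this with $x=aI_{n}$ and $y=-L$ yields
\[
(aI_{n}-L)^{p}=(aI_{n})^{p}+(-L)^{p}.
\]

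Next I would evaluate the two summands. Since $a$ is an integer multiple of the unity of $\F$, we may write $a=m\cdot 1_{\F}$ for some integer $m$, and Fermat's little theorem in the prime field $\F_{p}\subseteq\F$ gives $m^{p}\equiv m\pmod p$, so $a^{p}=a$; hence $(aI_{n})^{p}=a^{p}I_{n}=aI_{n}$. Because $p$ is odd, $(-L)^{p}=(-1)^{p}L^{p}=-L^{p}$, and the hypothesis $L^{p}=L$ turns this into $-L$. Combining, $(aI_{n}-L)^{p}=aI_{n}-L$, which is the claim.

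I do not expect any genuine obstacle here; the only points requiring a word of justification are the vanishing of the middle binomial coefficients modulo $p$ (which is why the argument needs characteristic exactly $p$ and the commutativity of $aI_{n}$ with $L$) and the use of oddness of $p$ to keep the sign in $(-L)^{p}=-L^{p}$. Both are routine, so the proof is short.
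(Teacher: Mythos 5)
Your proof is correct and follows essentially the same route as the paper: the paper also expands $(aI_n-L)^p$ via the vanishing of the intermediate binomial coefficients modulo $p$ (citing an external lemma for that fact rather than proving it directly, as you do), then uses $a^p=a$ and the oddness of $p$ for the sign. No issues to report.
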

\begin{proof}
For a natural number $m>2,$ we denote as in \cite{Pojar}, by $\alpha_1(m)$-the smallest $\alpha\in \{1,2,\ldots,m\}$ such that $m\choose {\alpha}$ is a nonmultiple of $p.$ From \cite{Pojar}(Lemma II.4) it follows that $\alpha_1(p)=p,$ therefore
$$(aI_n-L)^p=(aI_n)^p-L^p=a^pI_n^p-L=aI_n-L.$$
\end{proof}
The proof of the following Lemma has a similar flow with Lemma $2$ in \cite{BrMe}, where $a=1$ and $p=2.$
\begin{lem}\label{mainLem}
Let $n\geq 3$ be a positive integer, and $p$ be an odd prime, $\F$ a field of characteristic $p,$  and $a\in \{1,2,\dots,p-1\}.$ Let $k\in \{1,2,\ldots,n-1\},$
such that $n-k$ is odd.
Let $C\in \mathcal{M}_n(\F)$ be a companion matrix. If $\mbox{trace}(C)=(\frac{n+k+1}{2}\cdot a-l)\cdot 1,$ for some $l\in \{1,2,\dots,p-1\},$
 then $C$ is similar to a matrix $D$ which has a decomposition $D=E+V,$ such that $E^p=E,$ $V^{k+1}=0,$ the first column of $V$ is $(0,0,\ldots,0),$ and the last line of $E$ is $(0,0,\ldots,0,a).$
\end{lem}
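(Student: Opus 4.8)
The plan is to imitate, and generalize to arbitrary odd $p$ and arbitrary $a\in\{1,\dots,p-1\}$, the construction in \cite{BrMe}, Lemma 2 (which treats $a=1$, $p=2$). Since $n-k$ is odd I write $n=k+2j+1$ with $j\ge 0$; then $\frac{n+k+1}{2}=k+j+1$, so the hypothesis becomes $\operatorname{trace}(C)=\bigl((k+j+1)a-l\bigr)\cdot 1$. I would first normalize $C$: by Lemma \ref{use1} it is similar to a matrix of the form $C'+\diag(b_1,\dots,b_r,0,\dots,0)$ with the $b_i$ integer multiples of unity (chosen, as in the proof of Lemma \ref{ppotent}, so that the diagonal part absorbs most of the trace and Proposition \ref{PrescrDecomp} becomes applicable); I would build the desired decomposition for this normalized matrix and then conjugate back. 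Because, by Remark \ref{PUpperTr}, the conjugating matrix is upper triangular with $1$'s on the diagonal, conjugation by it sends a matrix with zero first column to another such matrix and a matrix with last row $(0,\dots,0,a)$ to another such matrix, so the two ``interface'' conditions in the conclusion survive this transfer.

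The heart of the argument is then to write down an explicit matrix $D$, similar to the normalized companion, together with a splitting $D=E+V$ of the following shape (in the spirit of the proof of Proposition \ref{PrescrDecomp} and of \cite{BrMe}, Lemma 2). The matrix $E$ is diagonalizable with spectrum contained in the prime field — a natural choice being eigenvalue $a$ with multiplicity $k+j$, a single eigenvalue $a-l$, and eigenvalue $0$ with multiplicity $j$, which gives $\operatorname{trace}(E)=(k+j+1)a-l=\operatorname{trace}(C)$ — and is arranged so that $e_n^{\top}E=a\,e_n^{\top}$, i.e. its last row is $(0,\dots,0,a)$; its $p$-potency then follows either directly from its spectrum or from the proof of Proposition \ref{PrescrDecomp}, with Lemma \ref{anotherPpotent} handling the value $a=p-1$, which falls outside the range of that proposition. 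The matrix $V=D-E$ is nilpotent, built as a union of Jordan chains each of length at most $k+1$ (whence $V^{k+1}=0$) and with zero first column. The free coefficients $c_0,\dots,c_{n-2}$ of $C$ are fed into the off-diagonal entries of $D$, playing the role of the free polynomial $g$ of degree $\le n-2$ in Proposition \ref{PrescrDecomp}; this is what makes the construction realize every companion matrix of the prescribed trace rather than just one.

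Finally I would verify that $D$ is nonderogatory — for instance by exhibiting a cyclic vector, or by tracking the similarity through the normalization — so that $D$ is indeed similar to $C$, and that $\chi_D=\chi_C$, the trace hypothesis being exactly what forces the coefficients of $X^{n-1}$ to agree (using that $\operatorname{trace}(D)=\operatorname{trace}(E)$ since $V$ is nilpotent). I expect the main obstacle to be the simultaneous bookkeeping in the construction of $D$: its entries must be chosen so that, at the same time, $E$ stays $p$-potent, $V$ has nilpotence index at most $k+1$ (which forces the coupling between the Jordan chains of $V$ to vanish on the critical superdiagonal positions), the first column of $V$ is zero, the last row of $E$ is $(0,\dots,0,a)$, and the characteristic polynomial is still free to be any polynomial of that trace. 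It is precisely here that one must reproduce the explicit pattern of \cite{BrMe}, Lemma 2, adapted to general $p$ and $a$; the remaining steps are routine computations or direct applications of Lemma \ref{use1}, Proposition \ref{PrescrDecomp}, and Lemma \ref{anotherPpotent}.
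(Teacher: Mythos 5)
Your proposal identifies the right target---an explicit block decomposition generalizing Lemma 2 of \cite{BrMe} to arbitrary odd $p$ and arbitrary $a$---but it does not actually construct it. The step you yourself flag as ``the main obstacle'' (choosing the entries of $D$, $E$, $V$ so that simultaneously $E^p=E$, $V^{k+1}=0$, the first column of $V$ vanishes, the last row of $E$ is $(0,\dots,0,a)$, and every companion matrix of the prescribed trace is realized) is precisely the content of the lemma, and ``reproduce the explicit pattern of \cite{BrMe}, Lemma 2, adapted to general $p$ and $a$'' restates the problem rather than solving it. The paper does this concretely: for $a\in\{2,\dots,p-1\}$ it changes basis by the recursion $f_1=e_1$, $f_i=Cf_{i-1}-af_{i-1}$ for even $i\le n-k+1$, $f_i=Cf_{i-1}$ for odd $i\le n-k+1$, and $f_i=af_{i-1}-Cf_{i-1}$ for $i>n-k+1$, which exhibits $C$, in the new basis, as a block matrix $D$ whose lower-right $k\times k$ block is $Q=aI_k-C_{d_{n-k+1},\dots,d_n}$; the trace hypothesis forces $\Tr(C_{d_{n-k+1},\dots,d_n})=l\neq 0$, Lemma \ref{ppotent} splits this $k\times k$ companion block into a $p$-potent plus a nilpotent $T$, Lemma \ref{anotherPpotent} shows the corresponding $aI_k-L$ is $p$-potent, and $V$ is taken block upper triangular with a square-zero $(n-k)\times(n-k)$ block and the $k\times k$ nilpotent $T$, so $V^{k+1}=0$ follows from the block sizes alone; the $p$-potency of $E=D-V$ is then a direct block-power computation, and the interface conditions are read off the explicit blocks. (The case $a=1$ is simply quoted from \cite{BrMe}, an idempotent being $p$-potent.)

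Beyond incompleteness, the specific route you outline would not deliver the key conclusion. Applying Lemma \ref{use1} and Proposition \ref{PrescrDecomp} to the full $n\times n$ matrix only lets you prescribe the characteristic polynomial of the complement $M$; it gives no control over its minimal polynomial, so even in the cases where the forced $X^{n-1}$-coefficient allows $\chi_M=X^n$, the resulting nilpotent may have index as large as $n$, not $\le k+1$. Since the index bound is the whole point of the lemma, Proposition \ref{PrescrDecomp} cannot carry the argument at top level; the paper uses it (through Lemma \ref{ppotent}) only on the trailing $k\times k$ block, where any nilpotent automatically has index at most $k$. Your observations that conjugation by the upper triangular, unit-diagonal matrix of Remark \ref{PUpperTr} preserves a zero first column and a last row $(0,\dots,0,a)$, and that $\Tr(E)$ must equal $\Tr(C)$, are correct but peripheral; as written, the proposal is a plan whose decisive construction and verifications are missing.
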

\begin{proof}
For $a=1$ by Lemma $2$ in \cite{BrMe} we get the decomposition with $E^2=E,$ and since an idempotent is a $p$-potent, it follows that we have the desired decomposition.
\\ Now let $a\in \{2,\ldots,p-1\}.$ Let $(e_1,e_2,\ldots,e_n)$ be the canonical basis of $\F^n$. We construct inductively a new basis $(f_1,f_2,\ldots,f_n)$ in the following way:
\\ $f_1=e_1;$
\\ for every even $i\in \{2,\ldots, n-k+1\},$ $f_i=Cf_{i-1}-af_{i-1};$
\\ for every odd $i\in \{2,\ldots, n-k+1\},$ $f_i=Cf_{i-1};$
\\ for every $i\in \{n-k+2,\ldots,n\},$ $f_i=af_{i-1}-Cf_{i-1};$

It can be proved that for every $j\in \{1,2,\ldots,n\}$  we have $\textrm{Lin}(\{e_1,\ldots,e_j\})=\textrm{Lin}(\{f_1,\ldots,f_j\}),$ hence $\{f_1,\ldots,f_n\}$ is a basis for $\F^n.$
The matrix associated to $C$ in the basis $\{f_1,\ldots,f_n\}$ has the form $$D=\left(\begin{array}{cc}
M & N \\
P & Q
\end{array}\right),$$
with $$M=\left(\begin{array}{cccccccc}
a  & 0 & 0 & 0 & \dots & 0 & 0 & 0 \\
1  & 0 & 0 & 0 & \dots & 0 & 0 & 0 \\
0  & 1 & a & 0 & \dots & 0 & 0 & 0 \\
0  & 0 & 1 & 0 & \dots & 0 & 0 & 0 \\
\vdots & \vdots & \vdots & \vdots & \ddots & \vdots & \vdots & \vdots \\
0  & 0 & 0 & 0 & \dots & a & 0 & 0 \\
0  & 0 & 0 & 0 & \dots & 1 & 0 & 0 \\
0  & 0 & 0 & 0 & \dots & 0 & 1 & a
\end{array}\right)\in \mathcal{M}_{n-k}(\F),$$

$$N=\left(\begin{array}{ccccc}
 0 & 0 & \dots & 0 & d_1 \\
 0 & 0 & \dots & 0 & d_2 \\
 \vdots & \vdots & \ddots & \vdots & \vdots \\
 0 & 0 & \dots & 0 & d_{n-k-1} \\
 0 & 0 & \dots & 0 & d_{n-k}
\end{array}\right)\in \mathcal{M}_{n-k,k}(\F),$$

$$P=\left(\begin{array}{ccccc}
 0 & 0 & \dots & 0 & 1 \\
 0 & 0 & \dots & 0 & 0 \\
 \vdots & \vdots & \ddots & \vdots & \vdots \\
 0 & 0 & \dots & 0 & 0 \\
 0 & 0 & \dots & 0 & 0
\end{array}\right)\in \mathcal{M}_{k,n-k}(\F),$$

and $Q=aI_k-C_{d_{n-k+1},\ldots,d_n}$ for some $d_{n-k+1},\ldots,d_n\in \F.$ Moreover from $$\mbox{trace}(M)+\mbox{trace}(Q)=\mbox{trace}(C),$$ it follows that
$$\frac{n-k+1}{2}\cdot a+ka+d_{n}=(\frac{n+k+1}{2}\cdot a-l)\cdot 1,$$ hence
$$\frac{n+k+1}{2}\cdot a+d_{n}=(\frac{n+k+1}{2}\cdot a-l)\cdot 1,$$ therefore
$$-d_{n}=l\in \{1,2,\dots,p-1\}.$$

By Lemma \ref{ppotent}, since $a\in \{2,3,\dots, p-1\},$ it follows that there exists a $p$-potent matrix of the form $$L=\left(\begin{array}{cc}
a\cdot I_l  & L_{1,2} \\
0 & 0
\end{array}\right)$$
such that $C_{d_{n-k+1},\ldots,d_n}-L$ is nilpotent.
We derive that there exists a decomposition $Q=(aI_k-L)+T$ such that $T$ is nilpotent.
Morevore by Lemma \ref{anotherPpotent} we know that $$K=aI_k-L$$ is $p$-potent.
We note for further use that the first column of $K$ is $(0,0,\ldots,0,0)$ and the last line of $K$ is $(0,0,\ldots,0,a).$
We consider $$V=\left(\begin{array}{cc}
R  & S \\
0 & T
\end{array}\right),$$
with
$$R=\left(\begin{array}{ccccccc}
0  & 0 & 0 & 0 & \dots & 0 & 0   \\
0  & 0 & 0 & 0 & \dots & 0 & 0   \\
0  & 1 & 0 & 0 & \dots & 0 & 0   \\
0  & 0 & 0 & 0 & \dots & 0 & 0  \\
\vdots & \vdots & \vdots &  \vdots & \ddots & \vdots & \vdots \\
0  & 0 & 0 & 0 & \dots & 0 & 0 \\
0  & 0 & 0 & 0 & \dots & 1 & 0
\end{array}\right)\in \mathcal{M}_{n-k}(\F),$$
(where $0$ and $1$ alternate on the diagonal below the main one),

$$S=\left(\begin{array}{ccccc}
 0 & 0 & \dots & 0 & d_1 \\
 0 & 0 & \dots & 0 & 0 \\
 0 & 0 & \dots & 0 & d_3 \\
 0 & 0 & \dots & 0 & 0 \\
 \vdots & \vdots & \ddots & \vdots & \vdots \\
 0 & 0 & \dots & 0 & d_{n-k}
\end{array}\right)\in \mathcal{M}_{n-k,k}(\F),$$
Since $RS=0$ it is easy to see that $V^{k+1}=\left(\begin{array}{cc}
R^{k+1}  & ST^k \\
O & T^{k+1}
\end{array}\right).$ But $T$ is a $k$ by $k$ nilpotent, and therefore $V^{k+1}=0.$
In order to see that $D-V$ is $p$-potent we observe that it has the form
$$D-V=\left(\begin{array}{cc}
X  & Y \\
Z & K
\end{array}\right),$$
with $$X=\left(\begin{array}{cccccccc}
a  & 0 & 0 & 0 & \dots & 0 & 0 & 0 \\
1  & 0 & 0 & 0 & \dots & 0 & 0 & 0 \\
0  & 0 & a & 0 & \dots & 0 & 0 & 0 \\
0  & 0 & 1 & 0 & \dots & 0 & 0 & 0 \\
\vdots & \vdots & \vdots & \vdots & \ddots & \vdots & \vdots & \vdots \\
0  & 0 & 0 & 0 & \dots & a & 0 & 0 \\
0  & 0 & 0 & 0 & \dots & 1 & 0 & 0 \\
0  & 0 & 0 & 0 & \dots & 0 & 0 & a
\end{array}\right)\in \mathcal{M}_{n-k}(\F),$$
$$Y=\left(\begin{array}{cccccc}
 0 & 0 & 0 & \dots & 0 & 0 \\
 0 & 0 & 0 & \dots & 0 & d_2 \\
 0 & 0 & 0 & \dots & 0 & 0 \\
 0 & 0 & 0 & \dots & 0 & d_4\\
 \vdots & \vdots & \vdots & \ddots & \vdots & \vdots \\
 0 & 0 & 0 & \dots & 0 & d_{n-k-1} \\
 0 & 0 & 0 & 0 & 0 & 0
\end{array}\right)\in \mathcal{M}_{n-k,k}(\F),$$
$$Z=\left(\begin{array}{ccccc}
 0 & 0 & \dots & 0 & 1 \\
 0 & 0 & \dots & 0 & 0 \\
 \vdots & \vdots & \ddots & \vdots & \vdots \\
 0 & 0 & \dots & 0 & 0 \\
 0 & 0 & \dots & 0 & 0
\end{array}\right)\in \mathcal{M}_{k,n-k}(\F),$$
and $$K=Q-T=(aI_k-L)+T-T=aI_k-\left(\begin{array}{cc}
aI_l  & L_{12} \\
0 & 0
\end{array}\right)=\left(\begin{array}{cc}
0  & -L_{12} \\
0 & aI_{k-l}
\end{array}\right)\in \mathcal{M}_{k}(\F).$$
By $0=YZ=XY=KZ=ZY$ we get that
$$(D-V)^p=\left(\begin{array}{cc}
X^p  & YK^{p-1} \\
ZX^{p-1} & K^p
\end{array}\right).$$

We will compute $X^{p-1}$ and $X^p$.
Since $a\in \{2,\dots, p-1\}$ we have that $a^{p-1}=1,$ and $a^p=a.$ Therefore
$$X^{p-1}=\left(\begin{array}{cccccccc}
1  & 0 & 0 & 0 & \dots & 0 & 0 & 0 \\
a^{p-2}  & 0 & 0 & 0 & \dots & 0 & 0 & 0 \\
0  & 0 & 1 & 0 & \dots & 0 & 0 & 0 \\
0  & 0 & a^{p-2} & 0 & \dots & 0 & 0 & 0 \\
\vdots & \vdots & \vdots & \vdots & \ddots & \vdots & \vdots & \vdots \\
0  & 0 & 0 & 0 & \dots & 1 & 0 & 0 \\
0  & 0 & 0 & 0 & \dots & a^{p-2} & 0 & 0 \\
0  & 0 & 0 & 0 & \dots & 0 & 0 & 1
\end{array}\right)\in \mathcal{M}_{n-k}(\F),$$
and hence $ZX^{p-1}=Z.$
We also obtain $X^p=X.$
Now we will compute $K^{p-1}$ and $K^p,$ for $K=\left(\begin{array}{cc}
0  & -L_{12} \\
0 & aI_{k-l}
\end{array}\right).$
Since $a\in \{2,3,\dots,p-1\}$ we obtain that $a^{p-1}=1,$ (and also $a^p=a)$.
Now it follows that $K^{p-1}=\left(\begin{array}{cc}
0  & -a^{-1}L_{12} \\
0 & I_{k-l}
\end{array}\right),$  $K^p=K,$ and also $YK^{p-1}=Y.$
Finally $(D-V)^p=D-V.$
For further use we point out that the last line of $D-V$ is $(0,0,\dots, 0, a),$ and the first column of $V$ consists only of zeros.
\end{proof}

\begin{lem}\label{maincor}
Let $\F$ be a field of positive odd characteristic $p.$ Suppose that $n\geq 3$ is a positive integer, $k$ a positive integer such that $n-k$ is odd, and $C\in \mathcal{M}_n(\F)$ is a companion matrix, such that $\Tr(C)=t\cdot 1,$ with $t\in \{0,1,\dots,p-1\},$
  and $\frac{n+k+1}{2}\cdot 1\neq 0.$  Then there exist $D,$ $E$ and $V$ such that $C\sim D=E+V$, $E^p=E,$ last line of $E$ is $(0,\dots,0,a)$ (with $a\in \{1,2,\dots,p-1\}$), $V^{k+1}=0$, and the first column of $V$ is $(0,\dots,0)$.
\end{lem}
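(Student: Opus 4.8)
The plan is to deduce the statement directly from Lemma~\ref{mainLem}, which already produces a decomposition $D=E+V$ with $E^p=E$, $V^{k+1}=0$, first column of $V$ zero, and last line of $E$ equal to $(0,\dots,0,a)$ --- exactly the conclusion required here. The only gap between the hypotheses of Lemma~\ref{maincor} and those of Lemma~\ref{mainLem} is the shape of the trace: here we only know $\Tr(C)=t\cdot 1$ with $t\in\{0,1,\dots,p-1\}$, whereas Lemma~\ref{mainLem} asks for $\Tr(C)=(\tfrac{n+k+1}{2}\,a-l)\cdot 1$ with $a,l\in\{1,\dots,p-1\}$. So essentially the whole content of the proof is an arithmetic selection step: choose admissible $a$ and $l$ realising the given $t$.

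First I would record the easy structural points. Since $n-k$ is odd, $n+k+1$ is even, so $m:=\tfrac{n+k+1}{2}$ is a positive integer, and $k\in\{1,\dots,n-1\}$, matching the requirement of Lemma~\ref{mainLem}. The hypothesis $\tfrac{n+k+1}{2}\cdot 1\ne 0$ says precisely that $m\cdot 1_{\F}\ne 0$ in $\F$, i.e. $p\nmid m$, so multiplication by $m$ is a bijection of $\Z/p\Z$.

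Next comes the selection step. I want $a,l\in\{1,\dots,p-1\}$ with $(ma-l)\cdot 1_{\F}=t\cdot 1_{\F}$, equivalently $l\equiv ma-t\pmod p$ together with $l\not\equiv 0\pmod p$; the latter condition just says $ma\not\equiv t\pmod p$. Since $a\mapsto ma$ is a bijection modulo $p$, at most one $a\in\{1,\dots,p-1\}$ satisfies $ma\equiv t\pmod p$, so --- using $p-1\ge 2$ --- I may fix some $a\in\{1,\dots,p-1\}$ with $ma\not\equiv t\pmod p$, and then let $l$ be the unique representative of $ma-t$ in $\{1,\dots,p-1\}$. With these choices $\Tr(C)=(ma-l)\cdot 1=(\tfrac{n+k+1}{2}\,a-l)\cdot 1$.

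Finally I would invoke Lemma~\ref{mainLem} with this $a$, this $l$, the same $k$, and the given $C$: all of its hypotheses now hold, and it yields $D$, $E$, $V$ with exactly the listed properties, which completes the proof. I do not expect a genuine obstacle --- Lemma~\ref{mainLem} carries all the structural weight. The one point that must not be overlooked is that the invertibility of $m$ (which is exactly why the hypothesis $\tfrac{n+k+1}{2}\cdot 1\ne 0$ is imposed) and the bound $p\ge 3$ are both genuinely used to guarantee that a valid pair $(a,l)$ exists.
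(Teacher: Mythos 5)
Your proposal is correct and follows essentially the same route as the paper: both proofs reduce Lemma~\ref{maincor} to Lemma~\ref{mainLem} by using the hypothesis $\frac{n+k+1}{2}\cdot 1\neq 0$ (so that multiplication by $m=\frac{n+k+1}{2}$ is injective modulo $p$) together with $p\geq 3$ to select $a\in\{1,\dots,p-1\}$ and a nonzero $l$ with $\Tr(C)=(\frac{n+k+1}{2}a-l)\cdot 1$. The only difference is presentational: the paper lists all the residues $l_1,\dots,l_p$ and argues they are pairwise distinct, while you argue that at most one $a$ is excluded, which is the same counting argument.
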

\begin{proof} Let $\Tr(C)=t\cdot 1, t\in \{0,1,\dots,p-1\}.$
 There exist $l_{1},l_{2},\dots,l_{p-1},l_p\in \{0,1,\dots,p-1\}$ such that
             $$t\cdot 1=(\frac{n+k+1}{2}\cdot 1-l_{1})\cdot 1=(\frac{n+k+1}{2}\cdot 2-l_{2})\cdot 1=\dots$$ $$=(\frac{n+k+1}{2}\cdot(p-1)-l_{p-1})\cdot 1=(\frac{n+k+1}{2}\cdot p-l_p)\cdot 1$$
             We have the following equalties:
             $$l_{1}=(\frac{n+k+1}{2}\cdot 1-t)\cdot 1$$
             $$l_{2}=(\frac{n+k+1}{2}\cdot 2-t)\cdot 1$$
             $$\vdots$$
             $$l_{p-1}=(\frac{n+k+1}{2}\cdot (p-1)-t)\cdot 1$$
             $$l_p=(\frac{n+k+1}{2}\cdot p-t)\cdot 1.$$
          Since $l_{1},l_{2},\dots,l_{p-1},l_p$ are $2$ by $2$ different (since $\frac{n+k+1}{2}\cdot 1\neq 0)$, and $p\geq 3,$
          there exists $a\in \{1,2,\dots,p-1\},$ such that $l_a\in \{1,2,\dots,p-1\}.$ Hence,  by Lemma \ref{mainLem}, there exist the $p$-potent $E$ and the nilpotent $V,$ such that $C=E+V$ last line of $E$ is $(0,\dots,0,a)$, $V^{k+1}=0$, and the first column of $V$ is $(0,\dots,0)$.
\end{proof}

\begin{lem}\label{-3}
Let $\F$ be a field of positive odd characteristic $p.$ Suppose that $n\geq 3$ is a positive integer, $k\in \{1,2,\ldots,n-1\},$ such that $n-k$ is odd, and $C\in \mathcal{M}_n(\F)$ is a companion matrix. If any of the following conditions is true:
 \begin{enumerate}
 \item there exists $a\in \{1,2,\dots,p-1\}$ such that $\Tr(C)=(\frac{n-(k+1)}{2}\cdot a+l)\cdot 1,\mbox{ }l\in \{1,2,\dots,p-1\}$ ;
 \item  $\frac{n-(k+1)}{2}\cdot 1\neq 0,$ and $\Tr(C)\in \{0,1,2,3,\dots,p-1\},$
 \end{enumerate}
 then there exist $D,$ $E$ and $V$ such that $C\sim D=E+V$, $E^p=E,$ last line of $E$ is $(0,\dots,0)$, $V^{k+1}=0$, and the first column of $V$ is $(0,\dots,0)$.
\end{lem}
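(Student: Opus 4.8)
The plan is to run the proof of \lemref{mainLem} again, making the two changes dictated by the two differences between the hypotheses: the trace is now $(\frac{n-(k+1)}{2}a+l)\cdot 1$ rather than $(\frac{n+k+1}{2}a-l)\cdot 1$, and the conclusion asks for a zero last row of $E$ instead of $(0,\dots,0,a)$.

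\textbf{Reduction of (2) to (1).} Suppose $\Tr(C)=t\cdot 1$ with $t\in\{0,\dots,p-1\}$ and $\frac{n-(k+1)}{2}\cdot 1\neq 0$. For $a\in\{0,1,\dots,p-1\}$ put $l_a:=(t-\frac{n-(k+1)}{2}a)\cdot 1\in\{0,\dots,p-1\}$; since $\frac{n-(k+1)}{2}\cdot 1\neq 0$, the map $a\mapsto l_a$ is injective on $\F_p$, so at most one $a$ gives $l_a=0$, and hence some $a\in\{1,\dots,p-1\}$ has $l_a\in\{1,\dots,p-1\}$. With that $a$ and $l:=l_a$ we are in case (1). (This is the same counting argument as in \lemref{maincor}.) So from now on assume (1); and for $a=1$ the idempotent decomposition from \cite{BrMe} already works, so take $a\ge 2$.

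\textbf{The construction.} I would imitate the basis $(f_1,\dots,f_n)$ of \lemref{mainLem}, but shifted by one step so that the top-left block acquires one fewer diagonal $a$: take $f_1=e_1$, $f_2=Cf_1$, then alternate $f_i=Cf_{i-1}-af_{i-1}$ and $f_i=Cf_{i-1}$ over $\{3,\dots,n-k+1\}$ with the even/odd roles swapped relative to \lemref{mainLem}, and $f_i=Cf_{i-1}$ for $i>n-k+1$. As before $\mathrm{Lin}(e_1,\dots,e_j)=\mathrm{Lin}(f_1,\dots,f_j)$ for all $j$, so the matrix $D$ of $C$ in this basis has block form $\left(\begin{smallmatrix}M&N\\P&Q\end{smallmatrix}\right)$ with $M\in\mathcal M_{n-k}(\F)$ of diagonal $(0,a,0,a,\dots,a,0)$ (so $\Tr(M)=\frac{n-(k+1)}{2}a$, one fewer $a$ than in \lemref{mainLem}), $N$ supported in its last column, $P$ having its only nonzero entry $=1$ in the first row, and $Q$ a companion matrix; the identity $\Tr(M)+\Tr(Q)=\Tr(C)$ then forces $\Tr(Q)=l\cdot 1\in\{1,\dots,p-1\}$. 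Next I would write $M=X+R$ with $X$ the block-diagonal $p$-potent assembled from $2\times 2$ blocks $\left(\begin{smallmatrix}0&0\\1&a\end{smallmatrix}\right)$ together with a trailing $1\times 1$ block $(0)$ — available because $n-k$ is odd and $M$'s diagonal ends in $0$ — so that $R^2=0$ and, crucially, $X$ has zero last row. For the bottom-right block I would apply \lemref{ppotent} to the companion matrix $Q$ (using $a\ge 2$ and $\Tr(Q)\in\{1,\dots,p-1\}$) to get a $p$-potent $K=\left(\begin{smallmatrix}aI_{l'}&K_{12}\\0&0\end{smallmatrix}\right)$ with $l'\in\{1,\dots,k-1\}$ and $Q-K$ nilpotent; since $l'<k$, this $K$ also has zero last row. (This is the main structural departure from \lemref{mainLem}, where the bottom-right $p$-potent block had the form $aI_k-L$ and hence last row $(0,\dots,0,a)$; here \lemref{anotherPpotent} is not needed for that block.)

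\textbf{Assembling $E$ and $V$, and the main obstacle.} With $E=\left(\begin{smallmatrix}X&Y\\Z&K\end{smallmatrix}\right)$ and $V=D-E=\left(\begin{smallmatrix}R&S\\0&T\end{smallmatrix}\right)$, where $T=Q-K$, $Z=P$, $Y+S=N$, the block computations run exactly as in \lemref{mainLem}: $V^{k+1}=0$ reduces, via $RS=0$, $R^2=0$ and $T^k=0$, to having the last column of $S$ lie in $\ker R$; and $E^p=E$ reduces, via the vanishing of $XY$, $YK$, $KZ$, $ZY$, $YZ$, to $X^p=X$, $ZX^{p-1}=Z$, $K^p=K$ and $YK^{p-1}=Y$. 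The first three hold automatically; the difficulty is $YK^{p-1}=Y$. Since $K^{p-1}=\left(\begin{smallmatrix}I_{l'}&a^{p-2}K_{12}\\0&0\end{smallmatrix}\right)$ has zero last row, one cannot, as in \lemref{mainLem}, absorb all of $N$ into $Y$; instead $YK^{p-1}=Y$ says the last $k-l'$ columns of $Y$ must be $a^{p-2}$ times (its first $l'$ columns)$\cdot K_{12}$, while the last column of $S=N-Y$ must land in $\ker R$. Proving that $N$ can always be split this way — exploiting the latitude in $K_{12}$ afforded by \lemref{ppotent}, and possibly a further refinement of the basis so that $N$'s last column has the right support relative to $\ker R$ — is the crux, and I expect this compatibility (together with the corner case $k=1$, where $P$'s nonzero entry sits in the last row so that the last row of $E$ needs separate treatment) to be the hardest part. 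Granting it, $E^p=E$, $V^{k+1}=0$, the first column of $V$ is $(0,\dots,0)$ (from $f_1=e_1$ and the shape of $V$), and the last line of $E$ is $(0,\dots,0)$ (from the zero last rows of $X$, of $Z$ for $k\ge 2$, and of $K$); since $C\sim D=E+V$, the lemma follows.
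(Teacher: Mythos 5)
Your proof is not complete, and the missing piece is exactly the heart of the lemma. In your direct construction you must split $N=Y+S$ so that simultaneously $YK^{p-1}=Y$ (needed for $E^p=E$, where now $K^{p-1}$ has zero last row, so $Y$ can no longer absorb the whole of $N$'s last column as it does in Lemma~\ref{mainLem}) and the last column of $S$ lies in $\ker R$ (needed for $V^{k+1}=0$); you state this compatibility, together with the corner case $k=1$ where the bottom-right $p$-potent block cannot have zero last row at all, as something to be ``granted''. Nothing in Lemma~\ref{ppotent} gives you control over $K_{1,2}$ or over the position of $N$'s last column relative to $\ker R$, so this is a genuine gap, not a routine verification. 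A smaller but real error: your claim that for $a=1$ the idempotent decomposition of \cite{BrMe} ``already works'' is false for this lemma, since that decomposition produces an idempotent whose last line is $(0,\dots,0,1)$, whereas here the last line of $E$ must be identically zero.

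The paper avoids all of this by a reduction rather than a new construction: from $\Tr(C)=(\frac{n-(k+1)}{2}a+l)\cdot 1$ one gets $\Tr(aI_n-C)=(\frac{n+k+1}{2}a-l)\cdot 1$; writing $-C\sim C_1$ for a companion $C_1$ and using Lemma~\ref{use1} to find a companion $C'$ with $C_1\sim -aI_n+C'$, one has $aI_n-C\sim C'$ and $C\sim aI_n-C'$. Lemma~\ref{mainLem} applied to $C'$ gives $C'\sim D'=E'+V'$ with $E'^p=E'$, last line of $E'$ equal to $(0,\dots,0,a)$, $(V')^{k+1}=0$ and zero first column of $V'$; then $C\sim (aI_n-E')+(-V')$, where $aI_n-E'$ is $p$-potent by Lemma~\ref{anotherPpotent} and its last line is zero because the two $a$'s cancel, while $-V'$ retains the nilpotence bound and the zero first column. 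This one-paragraph argument, which handles $k=1$ and $a=1$ uniformly, is the idea your attempt is missing; your treatment of case (2) by the counting argument does agree with the paper.
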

\begin{proof}
\begin{enumerate}
\item Assume that there exists $a\in \{1,2,\dots,p-1\}$ such that $\Tr(C)=(\frac{n-(k+1)}{2}\cdot a+l)\cdot 1,\mbox{ }l\in \{1,2,\dots,k-1\}.$
  Then $\Tr(aI_n-C)=(\frac{n+k+1}{2}\cdot a-l)\cdot 1.$ Since there exists a companion $C_1\in \M_n(\F)$ such that $-C\sim C_1,$ then $aI_n-C\sim aI_n+C_1.$
   \\By Lemma \ref{use1} there exists a companion $C'\in \M_n(F)$ such that $C_1\sim -aI_n+C'$. Then $aI_n-C\sim C'$, and we have $\Tr(C')=(\frac{n+k+1}{2}\cdot a-1)\cdot 1,$ and also $C\sim aI_n-C'.$
   \\By the former equality we get using Lemma \ref{mainLem} that there exist $D',$ $E'$ and $V'$ such that $C'\sim D'=E'+V'$, $E'^p=E',$ last line of $E'$ is $(0,\dots,0,a)$, $(V')^{k+1}=0$, and the first column of $V'$ is $(0,\dots,0).$
   \\We obtain that $C\sim (aI_n-E')+(-V'),$ and therefore there exist $D,$ $E=aI_n-E'$ and $V=-V'$ such that $C\sim D=E+V$, $E^p=E,$ last line of $E$ is $(0,\dots,0)$, $V^{k+1}=0$, and the first column of $V$ is $(0,\dots,0)$

\item Assume $\Tr(C)=t\cdot 1, t\in \{0,1,\dots,p-1\},$ and $\frac{n-(k+1)}{2}\cdot 1\neq 0.$
\\There exist $l_{1},l_{2},\dots,l_{p-1},l_p\in \{0,1,\dots,p-1\}$ such that
             $$t\cdot 1=(\frac{n-(k+1)}{2}\cdot1+l_{1})\cdot 1=(\frac{n-(k+1)}{2}\cdot2+l_{2})\cdot 1=\dots=$$ $$(\frac{n-(k+1)}{2}\cdot(p-1)+l_{p-1})\cdot 1=(\frac{n-(k+1)}{2}p+l_p)\cdot 1$$
             We have the following equalties:
             $$l_{1}=(t-\frac{n-(k+1)}{1}\cdot 1)\cdot 1$$
             $$l_{2}=(t-\frac{n-(k+1)}{2}\cdot 2)\cdot 1$$
             $$\vdots$$
             $$l_{p-1}=(t-\frac{n-(k+1)}{2}\cdot (p-1))\cdot 1$$
             $$l_p=(t-\frac{n+k+1}{2}\cdot p)\cdot 1.$$
          Since $l_{1},l_{2},\dots,l_{p-1},l_p$ are $2$ by $2$ different (since $\frac{n-(k+1)}{2}\cdot 1\neq 0)$, and $p\geq 3$, there exists $a\in \{1,2,\dots,p-1\}$ such that $l_a\in \{1,\dots,p-1\}.$ Hence condition $1)$ is fulfilled and hence there exist the $p$-potent $E$ and the nilpotent $V$ such that $C=E+V$ last line of $E$ is $(0,\dots,0,0)$, $V^{k+1}=0$, and the first column of $V$ is $(0,\dots,0)$.
\end{enumerate}
\end{proof}

\begin{rem}\label{Qdiag}
 The matrix $Q$-the transition matrix from the canonical basis of $\F^n$ to the basis in the proof of Lemma $2$, with $k=2,$ from \cite{BrMe} is upper triangular, with the main diagonal $(1,\dots,1,-1)$
\end{rem}

\begin{lem}\label{trip}
Let $Q$ be the matrix in Remark \ref{Qdiag},$\mbox{ }F=\diag(0,\dots,0,1),$ and $E_1$ the idempotent $D-V$ in Lemma $2$, with $n$ odd, and $k=2,$ from \cite{BrMe}.
Assume the characteristic of the field in which these matrices have all the entries is $3.$ Then the matrix
$$E=F+QE_1Q^{-1}$$ is tripotent.
\end{lem}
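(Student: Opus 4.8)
The plan is to reduce the tripotency of $E=F+QE_1Q^{-1}$ to a direct computation exploiting the very explicit block structure that $E_1$, $Q$, and $F$ all share. By Remark \ref{Qdiag}, $Q$ is upper triangular with diagonal $(1,\dots,1,-1)$, so $Q^{-1}$ is again upper triangular with the same diagonal; moreover $F=\diag(0,\dots,0,1)$ is the projection onto the last coordinate. The key first step is to understand how conjugation by $Q$ interacts with $F$: since $Q e_n = -e_n$ (the last column of $Q$ is $-e_n$ because $Q$ is upper triangular with last diagonal entry $-1$ and hence its last column has zeros above), we get $QFQ^{-1}=F$ as well, so $F$ is fixed by the conjugation. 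I would record this, together with the fact that $E_1=D-V$ from Lemma $2$ of \cite{BrMe} (which is \lemref{mainLem} with $a=1$, $p=2$ replaced appropriately) has last row $(0,\dots,0,1)$ and first column $0$, as in the conclusion of \lemref{mainLem}.

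Next I would compute $E^2$ and $E^3$ using $E=F+G$ where $G:=QE_1Q^{-1}$. Because $G$ is conjugate to an idempotent, $G^2=G$. So $E^2=F^2+FG+GF+G^2=F+FG+GF+G$ (using $F^2=F$), and the whole problem becomes controlling the cross terms $FG$ and $GF$. Here is where the structure pays off: $FG$ picks out the last row of $G$, and $GF$ picks out the last column of $G$. I would show the last row of $G=QE_1Q^{-1}$ equals the last row of $E_1$ times $Q^{-1}$ up to the sign from $Q$'s last diagonal entry, i.e. $e_n^{T}G = -e_n^{T}E_1 Q^{-1}$; since the last row of $E_1$ is $(0,\dots,0,1)$ and $Q^{-1}$ is upper triangular with last diagonal entry $-1$, this last row is again $(0,\dots,0,1)=e_n^{T}$. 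Hence $FG=F$. The last column $Ge_n$ is more delicate and will be the main obstacle (see below); I expect it to have the form $e_n + w$ where $w$ is supported on coordinates $<n$, with $Fw=0$, so that $GF$ is a matrix whose only nonzero column is the $n$-th, equal to $e_n+w$, and in particular $FGF=F$, $GFG = $ (column $n$ of $G$ copied appropriately).

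Putting this together, $E^2 = F + F + GF + G = 2F + GF + G$ in characteristic $3$, and then $E^3 = E\cdot E^2 = (F+G)(2F+GF+G)$. Expanding and repeatedly using $F^2=F$, $G^2=G$, $FG=F$, and the computed form of $GF$, all terms should collapse: the $2F\cdot F=2F$, $FGF=F$, $F\cdot G=F$ contribute $2F+F+F=4F=F$; the $G\cdot 2F=2GF$, $G\cdot GF=GF$, $G\cdot G=G$ contribute $2GF+GF+G=3GF+G=G$; and one checks the remaining mixed terms cancel mod $3$, yielding $E^3=F+G=E$. The hard part will be pinning down $GF$ precisely — i.e. computing the last column of $QE_1Q^{-1}$ — because unlike the last row, it genuinely depends on the off-diagonal entries of $Q$ and on the entries of $E_1$ above the last row. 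I would handle this by writing $Q=I+R$ with $R$ strictly upper triangular except for the $(-2)$ in the bottom-right corner, tracking $Q^{-1}e_n=-e_n$ exactly, and then showing $E_1Q^{-1}e_n = -E_1 e_n$, whose image under $Q$ is $-Q E_1 e_n$; the vector $E_1e_n$ is the last column of $E_1=D-V$, which from the explicit matrices in \lemref{mainLem} (with $k=2$) has a completely explicit shape, so $GF = -QE_1e_n\, e_n^T$ can be written out and its interaction with $F$ and $G$ checked directly. Once $GF$ is in hand the rest is the routine characteristic-$3$ bookkeeping sketched above.
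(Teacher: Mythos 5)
Your derivation of $FG=F$ (where $G=QE_1Q^{-1}$) is correct, but the other structural claims about $Q$ rest on a row/column mix-up. Remark \ref{Qdiag} makes $Q$ (hence $Q^{-1}$) upper triangular with diagonal $(1,\dots,1,-1)$; this forces the last \emph{row} of $Q$ and of $Q^{-1}$ to be $(0,\dots,0,-1)$ — which is exactly what you use, correctly, to get $e_n^{T}G=e_n^{T}$ and so $FG=F$ — but it says nothing about the last \emph{column}: the entries above the $(n,n)$ position are in general nonzero (the last basis vector $f_n=f_{n-1}-Cf_{n-1}$ has components along many of the $e_i$). So the claims $Qe_n=-e_n$, $Q^{-1}e_n=-e_n$, and hence $QFQ^{-1}=F$, are unjustified and generally false, and the route you propose for "pinning down" $GF$ — replacing $E_1Q^{-1}e_n$ by $-E_1e_n$ — would fail; one really does need the unknown entries $q'_{1n},\dots,q'_{n-1,n}$ of the last column of $Q^{-1}$ if one insists on computing $GF$ explicitly.

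The good news is that the step you single out as "the hard part" is not needed at all, and your own expansion already closes the argument: since $F^2=F$, $G^2=G$ (conjugate of an idempotent) and $FG=F$, you have $FGF=(FG)F=F$ and $G(GF)=G^2F=GF$ for free, so $E^3=(F+G)(2F+GF+G)=2F+FGF+FG+2GF+G^2F+G^2=4F+3GF+G=F+G=E$ in characteristic $3$, with no information about the last column of $G$ required. This is essentially the paper's proof: there too the first step is $FQE_1Q^{-1}=F$, followed by expanding $E^2$ and $E^3$; the difference is that the paper writes $GF=QT$ with $T=E_1Q^{-1}F$ given entrywise and then checks $(I_n-E_1)T=0$ by an explicit block computation, which is just the identity $E_1(E_1Q^{-1}F)=E_1Q^{-1}F$, i.e. automatic from $E_1^2=E_1$. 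So once the false claims about the last columns of $Q$ and $Q^{-1}$ are excised, your formal bookkeeping is not only correct but a mild simplification of the published argument; as written, however, the proposal hinges its "main obstacle" on a step that is wrong.
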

\begin{proof}
Since last line of $E_1$ is $(0,\dots,0,1)$
we have that
$$FQE_1Q^{-1}=-FE_1Q^{-1}=-FQ^{-1}=F.$$
Let $c_n(Q^{-1})=(q'_{1n},\dots,q'_{n-1,n},-1)$ be the last column of $Q^{-1}$, and $T=\left(\begin{array}{ccccc}
 0 & 0 & \dots & 0 & q'_{1n} \\
 0 & 0 & \dots & 0 & q'_{1n}-d_2 \\
 0 & 0 & \dots & 0 & q'_{3n} \\
 0 & 0 & \dots & 0 & q'_{3n}-d_4 \\
 \vdots & \vdots & \ddots & \vdots & \vdots \\
 0 & 0 & \dots & 0 & q'_{n-2,n} \\
 0 & 0 & \dots & 0 & q'_{n-2,n}-u \\
 0 & 0 & \dots & 0 & -1 \\

\end{array}\right).$ We get:
$$QE_1Q^{-1}F=QE_1(0 \dots 0\mbox{ } c_n(Q^{-1}))=QT$$
 .\\
We derive that $$E^2=F+QE_1Q^{-1}+F+QT=-F+QE_1Q^{-1}+QT,$$
and $E^3=-F+F+FQT-QT+QE_1Q^{-1}+QE_1T=F-QT+QE_1Q^{-1}+QE_1T=E+QE_1T-QT.$
\\Moreover $$I_n-E_1=\left(\begin{array}{cc}
X'  & Y' \\
Z' & K'
\end{array}\right),$$
with $$X'=\left(\begin{array}{cccccccc}
0  & 0 & 0 & 0 & \dots & 0 & 0 & 0 \\
-1  & 1 & 0 & 0 & \dots & 0 & 0 & 0 \\
0  & 0 & 0 & 0 & \dots & 0 & 0 & 0 \\
0  & 0 & -1 & 1 & \dots & 0 & 0 & 0 \\
\vdots & \vdots & \vdots & \vdots & \ddots & \vdots & \vdots & \vdots \\
0  & 0 & 0 & 0 & \dots & 0 & 0 & 0 \\
0  & 0 & 0 & 0 & \dots & -1 & 1 & 0 \\
0  & 0 & 0 & 0 & \dots & 0 & 0 & 0
\end{array}\right)\in \mathcal{M}_{n-k}(\F),$$ and
$$Y'=\left(\begin{array}{cccccc}
 0 & 0 & 0 & \dots & 0 & 0 \\
 0 & 0 & 0 & \dots & 0 & -d_2 \\
 0 & 0 & 0 & \dots & 0 & 0 \\
 0 & 0 & 0 & \dots & 0 & -d_4\\
 \vdots & \vdots & \vdots & \ddots & \vdots & \vdots \\
 0 & 0 & 0 & \dots & 0 & -d_{n-k-1} \\
 0 & 0 & 0 & 0 & 0 & 0
\end{array}\right)\in \mathcal{M}_{n-k,k}(\F),$$
$$Z'=\left(\begin{array}{ccccc}
 0 & 0 & \dots & 0 & -1 \\
 0 & 0 & \dots & 0 & 0 \\
 \vdots & \vdots & \ddots & \vdots & \vdots \\
 0 & 0 & \dots & 0 & 0 \\
 0 & 0 & \dots & 0 & 0
\end{array}\right)\in \mathcal{M}_{k,n-k}(\F),$$
and $$K'=\left(\begin{array}{cc}
1  & u \\
0 & 0
\end{array}\right)\in \M_2(\F).$$
$$(I_n-E_1)T=\left(\begin{array}{ccccc}
 0 & 0 & \dots & 0 & 0 \\
 0 & 0 & \dots & 0 & -q'_{1n}+q'_{1n}-d_2+d_2 \\
 0 & 0 & \dots & 0 & 0 \\
 0 & 0 & \dots & 0 & -q'_{3n}+q'_{3n}-d_4+d_4 \\
 \vdots & \vdots & \ddots & \vdots & \vdots \\
 0 & 0 & \dots & 0 & 0 \\
 0 & 0 & \dots & 0 & -q'_{n-2,n}+q'_{n-2,n}-u+u \\
 0 & 0 & \dots & 0 & 0 \\

\end{array}\right)=0$$

We have obtained $QT=QE_1T,$ and in conclusion $E^3=E.$

\end{proof}

\section{Main result}

\begin{thm}\label{MainThm}
Let $\F$ be a field of positive odd characteristic $p.$ Suppose that $n\geq 1$ is a positive integer and $A\in \mathcal{M}_n(\F)$ is a nonderogatory matrix, such that $\Tr(A)=t\cdot 1,$ with $t\in \{0,1,\dots,p-1\}.$
Then there exist matrices $E$ and $V$, such that  $A=E+V,$ with  $E^p=E$ and $V^3=0.$
\end{thm}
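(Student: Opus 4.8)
The plan is to reduce everything to a companion matrix and then to feed it into the lemmas of the previous section, with the parameter $k$ chosen so small that their conclusion $V^{k+1}=0$ already forces $V^3=0$. Since $A$ is nonderogatory, $A=PCP^{-1}$ for some companion matrix $C\in\mathcal{M}_n(\F)$ and some invertible $P$; conjugation preserves $p$-potence, and $(PVP^{-1})^3=PV^3P^{-1}$, so it is enough to decompose $C$, and $\Tr(C)=\Tr(A)=t\cdot1$. (By \thmref{thmPcomp} this hypothesis is exactly the statement, from the abstract, that $A$ is a sum of a $p$-potent and a nilpotent.) The cases $n=1,2$ I would settle by hand: for $n=1$, $C=(t\cdot1)$ and $(t\cdot1)^p=t\cdot1$, so $E=C$, $V=0$; for $n=2$, write $C=C_{c_0,c_1}$ with $-c_1=t\cdot1$ and take $E$ upper triangular with diagonal entries $a$ and $t\cdot1-a$ (both in $\F_p$, with $a\neq t\cdot1-a$, possible since $p\geq3$) and off-diagonal entry $a^2-c_0$; then $E^p=E$ because $(a-(t\cdot1-a))^{p-1}=1$, while $C-E$ has trace $0$ and determinant $0$, so $(C-E)^2=0$.

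Now let $n\geq3$. Put $k=2$ if $n$ is odd and $k=1$ if $n$ is even, so that $n-k$ is odd, $k\in\{1,\dots,n-1\}$, and every decomposition produced by \lemref{mainLem}, \lemref{maincor} or \lemref{-3} satisfies $V^{k+1}=0$, hence $V^3=0$. If $\frac{n+k+1}{2}\cdot1\neq0$, apply \lemref{maincor} to $C$ and we are done; if $\frac{n+k+1}{2}\cdot1=0$ but $\frac{n-(k+1)}{2}\cdot1\neq0$, apply part (2) of \lemref{-3} and we are done. The only remaining possibility is $\frac{n+k+1}{2}\cdot1=\frac{n-(k+1)}{2}\cdot1=0$, and subtracting gives $(k+1)\cdot1=0$, which — $p$ being an odd prime and $k\in\{1,2\}$ — forces $p=3$, $k=2$, hence $n$ odd with $\frac{n+3}{2}\equiv0\pmod3$, i.e. $n\equiv3\pmod6$.

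So the only remaining work is in characteristic $3$, with $n\equiv3\pmod6$, $k=2$ and $\Tr(C)=t\cdot1$, $t\in\{0,1,2\}$. If $t\in\{1,2\}$ then, since $\frac{n+3}{2}\cdot1=0$, we have $t\cdot1=(\frac{n+3}{2}\cdot1-l)\cdot1$ with $l=3-t\in\{1,2\}$, so \lemref{mainLem} with $a=1$ applies directly and yields $V^3=0$. If $t=0$, set $F=\diag(0,\dots,0,1)$ and $C_0=C-F$; then $C_0$ is again a companion matrix and $\Tr(C_0)=-1\cdot1=2\cdot1$, so Lemma~$2$ of \cite{BrMe} (the case $a=1$, $k=2$ of \lemref{mainLem}) provides a transition matrix $Q$ as in Remark~\ref{Qdiag} with $Q^{-1}C_0Q=E_1+V_0$, $E_1^2=E_1$ and $V_0^3=0$. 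Then $C=C_0+F=QE_1Q^{-1}+QV_0Q^{-1}+F=(F+QE_1Q^{-1})+QV_0Q^{-1}$, and \lemref{trip} tells us exactly that $E:=F+QE_1Q^{-1}$ is tripotent, hence $p$-potent, while $V:=QV_0Q^{-1}$ has $V^3=0$.

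The main obstacle is precisely this last sub-case: when $p=3$ and $\Tr(C)=0$, no value of the parameter $a$ in \lemref{mainLem} brings the trace into admissible form, so one cannot read off a genuine $p$-potent summand from the earlier lemmas. The remedy is to borrow the idempotent decomposition of the nearby companion matrix $C_0=C-F$ — whose trace $2$ \emph{is} admissible — and to repair it by the rank-one perturbation $F$; verifying that $F+QE_1Q^{-1}$ is still tripotent is exactly the content of \lemref{trip}, and it is this detour that also explains why the exponent in $V^3=0$ cannot be lowered to $2$, matching the sharpness announced in the introduction.
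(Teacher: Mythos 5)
Your reduction to a companion matrix, the cases $n=1,2$, and the whole odd-$n$ analysis are essentially the paper's argument (your streamlining is legitimate: since you never need the ``special decomposition'' data, you may apply \lemref{mainLem} and \lemref{-3} to $C$ itself rather than to a shifted companion $C'$, and writing $C=F+C_0$ with $C_0$ companion replaces the use of \lemref{use1} in the trace-zero, $p=3$ case).

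The genuine gap is the even case. You feed $k=1$ into \lemref{maincor} and \lemref{-3}, i.e.\ ultimately into \lemref{mainLem} with $k=1$, and conclude $V^2=0$ for every even-size companion matrix with integer trace. The paper never establishes the $k=1$ instance of \lemref{mainLem}: its proof decomposes the $k\times k$ block $Q=aI_k-C_{d_{n-k+1},\dots,d_n}$ by applying \lemref{ppotent}, whose hypotheses require the companion block to have size $>1$, and for $k=1$, $a\geq 2$ the construction is impossible on its face --- the $1\times 1$ matrix $K$ would have to have zero first column (needed for $KZ=0$ in the block computation of $(D-V)^p$) and at the same time last line $(a)\neq(0)$, while the only nilpotent $1\times1$ matrix is $0$, forcing $K=(a-l)$ with $l\neq 0$; so the key identities $KZ=0$ and the claimed form of $K$ fail. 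Note also that your route would prove a strictly stronger theorem (nilpotence index $\leq 2$ for all even sizes), which the paper nowhere claims and which does not follow from anything proved there; its truth is not addressed by your argument. Since $k=3$ would only give $V^4=0$, no admissible choice of $k$ rescues this route, which is exactly why the paper treats even $n$ by bordering: it embeds the $(n-1)\times(n-1)$ odd-size matrix $D=E+V$ (with last row of $E$ equal to $(0,\dots,0,a)$, $a\neq 0$, and first column of $V$ zero) into an $n\times n$ matrix $D'\sim C$, the extra column entry being absorbed into the $p$-potent because $a\neq 0$ and the extra subdiagonal $1$ into the nilpotent because the first column of $V$ vanishes. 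To repair your proof you should adopt this bordering argument, and consequently reinstate in your odd case the paper's bookkeeping that produces a special decomposition with nonzero last diagonal entry $a$ of $E$ and zero first column of $V$ (which is precisely why the paper shifts to $C'$ via \lemref{use1} in the subcases where \lemref{-3} would only give $a=0$).
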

\begin{proof} It is sufficient to provide decomposition only when $A$ is a companion matrix (since a non-derogatory matrix is similar to a companion matrix, a matrix which is similar to an idempotent matrix is an idempotent matrix, while a matrix similar to a nilpotent is a nilpotent matrix).So take $C$ a companion matrix, with $\Tr(C)=t\cdot 1,$ with $t\in \{0,1,\dots,p-1\}.$ (This form of the trace of the companion matrix $C$ is provided by Theorem \ref{thmPcomp}) We have the following cases
\begin{itemize}
\item $n=1$; we know that C is sum of a $p$-potent and a nilpotent $V$,which is $0$. $0^3=0,$ and we get the desired decomposition.
\item $n=2$; we know that C is sum of a $p$-potent and a nilpotent V, which has the property  $V^2=0,$ and therefore $V^3=0,$ and we obtain the desired decomposition.
\item $n\geq 3$ and $n$ is odd; By Lemma \ref{use1} there exists a companion $C'\in \M_n(\F)$ such that that $C\sim I_n+C'$.\\ For further use we are going to prove more: there exist $D,E$ and $V$ such that $C\sim D=E+V,$ $E^p=E$, last line of $E$ is $(0,\dots,0,a)$, $a\in \{1,2,\dots,p-1\},$ $V^3=0$, first column of $V$ is $(0,\dots,0)$. We call such a decomposition, with $a$ not necessary non-zero, a special decomposition .
    \begin{itemize}
    \item $\frac{n+3}{2}\cdot1\neq0;$
          then by Lemma \ref{maincor} there exists $a\in \{1,2,\dots,p-1\}$ such that $C$ has a special decomposition, with last line of the $p$-potent $(0,\dots,0,a)$

    \item $\frac{n+3}{2}\cdot1=0;$
             \begin{itemize}
             \item $p\neq 3;$ in this situation we have $\frac{n-3}{2}\cdot1\neq 0.$
                     It follows by Lemma \ref{-3}(2) that $C'$ has a special decomposition, and last line of the $p$-potent $E'$ involved is $(0,\dots,0,0)$. Therefore $C\sim I_n+E'+V,$ and $C$ has a special decomposition, and last line of the $p$-potent $E=I_n+E'$ is $(0,\dots,0,1)$, and we have our desired decomposition for $C.$
             \item $p=3;$ in this situation we have $\frac{n-3}{2}\cdot1=0.$
                   \begin{itemize}
                   \item $\Tr(C)=1;$ then $\Tr(C')=(\frac{n-3}{2}\cdot 1+1)\cdot 1,$ and therefore $C'$ has a special decomposition, and last line of the $p$-potent $E'$ involved is $(0,\dots,0,0)$. Therefore $C\sim I_n+E'+V,$ and $C$ has a special decomposition, and last line of the $p$-potent $E=I_n+E'$ is $(0,\dots,0,1)$, and we have our desired decomposition for $C.$
                   \item $\Tr(C)=-1;$ then $\Tr(C)=(\frac{n+3}{2}\cdot 1-1)\cdot 1.$ It follows by Lemma \ref{mainLem} that $C$ has a special desired decomposition, with last line of $E$ being equal to $(0,\dots,0,1).$
                   \item $\Tr(C)=0;$ By Lemma \ref{use1} there exists a companion matrix $C_1\in \M_n(\F)$ such that $C\sim F+C_1,$ where $F=\diag(0,\dots,0,1).$ It follows that $\Tr(C_1)=-1=(\frac{n+3}{2}\cdot 1-1)\cdot 1,$ and therefore, by Lemma $2$ from \cite{BrMe} we obtain that there exist the idempotent $E_1,$ and the nilpotent $V_1,$ with $V_1^3=0$, where first column of $V_1$ is $(0,\dots, 0)$, such that $C_1=Q(E_1+V_1)Q^{-1},$ where $Q$ is the matrix from Remark \ref{Qdiag}. By the same Remark the last line of $Q$ is $(0,\dots,0,-1).$ Since last line of $E_1$ is $(0,\dots,0,1)$ we derive that last line of $QE_1Q^{-1}$ is $(0,\dots,0,1),$ and therefore last line of $E=F+QE_1Q^{-1}$ is $(0,\dots,0,2)$. Moreover $C\sim D=E+QV_1Q^{-1},$  the nilpotent $V=QV_1Q^{-1}$ has nilpotence index at most $3,$ and first column $(0,\dots,0),$ and $E$ is tripotent by Lemma \ref{trip}, it follows that we have a desired decomposition for $C.$
                   \end{itemize}
              \end{itemize}

\item $n\geq 3$ and $n$ is even;  We are going to use a technique similar with the one used for nil-clean decompositions in the proof of the main theorem from \cite{BrMe},
      obtaining a decomposition as sum of a $p$-potent and a nilpotent with nilpotence index at most $3$, for the companion matrix $C\in \mathcal{M}_n(\F),$  in the following way:
\end{itemize}
\end{itemize}
     we know that $C$ is similar to a matrix of the form
\[D'=
\begin{pmatrix}
  \begin{array}{c}
    0
  \end{array}
  & \rvline &
  \begin{array}{cccc}
    0  & \dots & 0 & d_1
\end{array}
   \\
\hline
  \begin{array}{c}
    1 \\
    0 \\
    \vdots\\
    0
  \end{array}
  & \rvline &
  \begin{matrix}
   \large{D}
  \end{matrix}
\end{pmatrix}
\]
where $D\in \mathcal{M}_{3}(\F)$ is the nonderogatory matrix $D$ we have found in previouse case, where we proved that there exists a special  decomposition for $D=E+V,$ and last line of $E$ is $(0,\dots,0,a)$ with $a\in \{1,2,\dots,p-1\}$, first column of $V$ is $(0,\dots,0),$ and $V^3=0.$
 Then since $2\leq n-1,$ $D'=E'+W,$ with $E'^p=E'$, (because $a\in \{1,2,\dots,p-1\}$), and
\[W=
\begin{pmatrix}
  \begin{array}{c}
    0
  \end{array}
  & \rvline &
 \begin{matrix}
   0 & \dots & 0 & 0
 \end{matrix}
 \\
\hline
  \begin{array}{c}
     1 \\
     0 \\
     \vdots\\
     0 \\
  \end{array}
  & \rvline &
  \begin{matrix}
   \large{V}
  \end{matrix}
\end{pmatrix}
\]

Since first column of $V$ is $(0,\dots,0)$ we have $W^3=0$. Therefore since $D'\sim C$, and a matrix similar to a $p$-potent is a $p$-potent and a matrix similar to a nilpotent is nilpotent, we have found a decomposition as sum of a $p$-potent and a nilpotent with nilpotence index at most $3$ for $C.$

\end{proof}

The following Proposition proves that in general we cannot decrease the bound
of the nilpotence degrees of the nilpotent matrices which can be used to obtain
decompositions as in Theorem \ref{MainThm}.

\begin{prop}
Let $\F$ be a field of characteristic $3.$
Let $C$ be a $3 \times 3$, companion matrix over $\F$ such that $\mbox{trace}(C)=1$, and $0,$ $1$ and $-1$ are not eigenvalues of $C.$  If $E$ and $N$ are $3 \times 3$ matrices
over $\F$ such that $C=E+N$, $E^3=E,$ and $N^3=0$ then $N^2\neq 0$.
\end{prop}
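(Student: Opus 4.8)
The plan is to argue by contradiction: assume that, in addition to $C=E+N$, $E^{3}=E$ and $N^{3}=0$, we also have $N^{2}=0$, and then produce an element of $\{0,1,-1\}$ that is an eigenvalue of $C$, contradicting the hypothesis. So from now on all that is used of the nilpotency hypothesis is $N^{2}=0$.

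First I would extract two structural facts. Since $E^{3}=E$, the minimal polynomial of $E$ divides $X^{3}-X=X(X-1)(X+1)$, and over a field of characteristic $3$ this polynomial splits into three distinct linear factors whose roots $0,1,-1$ lie in the prime field; hence $E$ is diagonalizable and every eigenvalue of $E$ lies in $\{0,1,-1\}$. Because $N$ is nilpotent, $\Tr(N)=0$, so $\Tr(E)=\Tr(C)=1\neq 0$ in $\F$. If the three eigenvalues of $E$ were pairwise distinct they would have to be exactly $0$, $1$ and $-1$, forcing $\Tr(E)=0$; therefore $E$ has a repeated eigenvalue $\mu\in\{0,1,-1\}$, and its eigenspace $\ker(\mu I-E)$ has dimension at least $2$. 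On the nilpotent side, $N^{2}=0$ gives $\mathrm{im}(N)\subseteq\ker(N)$, hence $2\,\mathrm{rank}(N)\leq 3$ and $\mathrm{rank}(N)\leq 1$. If $\mathrm{rank}(N)=0$ then $C=E$ has all its eigenvalues in $\{0,1,-1\}$ and we are already done; so I may assume $N=uv^{T}$ for some nonzero column vectors $u,v\in\F^{3}$.

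The key step is to evaluate the characteristic polynomial $\chi_{C}$ at $\mu$. Applying the general matrix determinant lemma $\det(M+xy^{T})=\det M+y^{T}\,\mathrm{adj}(M)\,x$ (valid over any commutative ring, with no invertibility needed) to $M=\mu I-E$ and the rank-one update $-uv^{T}$, one gets
\[
\chi_{C}(\mu)=\det\bigl(\mu I-E-uv^{T}\bigr)=\chi_{E}(\mu)-v^{T}\,\mathrm{adj}(\mu I-E)\,u .
\]
Here $\chi_{E}(\mu)=0$ because $\mu$ is an eigenvalue of $E$, and $\mathrm{adj}(\mu I-E)=0$ because $\mu I-E$ has rank at most $1$ (its kernel is at least $2$-dimensional), so all its $2\times 2$ minors vanish. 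Hence $\chi_{C}(\mu)=0$, i.e.\ $\mu\in\{0,1,-1\}$ is an eigenvalue of $C$, contradicting the assumption that none of $0,1,-1$ is an eigenvalue of $C$. This contradiction shows $N^{2}\neq 0$.

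I expect the only delicate points to be (i) the elementary counting that forces a repeated eigenvalue of $E$, which is exactly where the hypothesis $\Tr(C)=1$ enters, and (ii) invoking the matrix determinant lemma correctly in the rank-one case; once $\dim\ker(\mu I-E)\geq 2$ is in hand, the vanishing of $\mathrm{adj}(\mu I-E)$ is automatic, and that vanishing — ``a double eigenvalue of $E$ survives a rank-one perturbation'' — is really the heart of the argument. (If one prefers, one can instead reduce to a basis in which $N=\left(\begin{smallmatrix}0&0&1\\0&0&0\\0&0&0\end{smallmatrix}\right)$ and compute $\chi_C$ by hand, but the adjugate computation avoids the case analysis.) Note that the proof never uses that $C$ is a companion matrix, only its trace and the location of its eigenvalues; the companion-matrix wording is there to line up with the sharpness statement accompanying \thmref{MainThm}.
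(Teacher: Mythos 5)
Your argument is correct, and it reaches the contradiction by a different mechanism than the paper. The paper also assumes $N^2=0$, notes $\mathrm{rank}(N)\leq 1$, and uses $\Tr(E)=\Tr(C)=1$ together with $E^3=E$ to pin $E$ down to one of the diagonal forms $\diag(1,0,0)$, $\diag(-1,-1,0)$, $\diag(1,1,-1)$; it then rules out each case by the rank subadditivity inequalities $3=\mathrm{rank}(\lambda I_3-C)\leq \mathrm{rank}(\lambda I_3-E)+\mathrm{rank}(N)$ for $\lambda\in\{0,1,-1\}$, using that $C$, $I_3-C$, $I_3+C$ are invertible. You instead extract abstractly a repeated eigenvalue $\mu\in\{0,1,-1\}$ of the diagonalizable $E$ (same trace counting) and show $\chi_C(\mu)=0$ via the adjugate form of the matrix determinant lemma applied to the rank-one perturbation $N=uv^{T}$, with $\mathrm{adj}(\mu I-E)=0$ because $\mu I-E$ has rank at most $1$. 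Both routes rest on the same two structural facts (the trace forces a repeated eigenvalue of $E$; a rank-one $N$ cannot restore invertibility of $\mu I-E$), but your determinant-lemma step is heavier machinery than needed: once you know $\mathrm{rank}(\mu I-E)\leq 1$ and $\mathrm{rank}(N)\leq 1$, the one-line estimate $\mathrm{rank}(\mu I-C)\leq \mathrm{rank}(\mu I-E)+\mathrm{rank}(N)\leq 2<3$ already makes $\mu$ an eigenvalue of $C$, which is exactly the paper's inequality; conversely, your formulation avoids the case enumeration and, as you note, makes explicit that only $\Tr(C)=1$ and the eigenvalue hypothesis are used, not the companion form (the same is true of the paper's proof). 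One cosmetic point: your observation that the $N=0$ case must be treated separately is correct and is glossed over in the paper, where it is implicitly covered by $\mathrm{rank}(N)\leq 1$.
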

\begin{proof}
Let $C$ be a $3 \times 3$, companion matrix over $\F$ such that $\mbox{trace}(C)=1$, and $0,$ $1$ and $-1$ are not eigenvalues of $C,$ and assume there exist matrices $E$ and $N$ such that $A=E+N$, $E^3=E,$ and $N^2=0.$

Since $N^2=0,$ and $N$ is a $3$ by $3$ matrix it follows that there exist at least $2$ blocks in the Frobenius normal form of $N$, therefore that matrix has at least $2$ zero rows. Hence $\mbox{rank}(N)\leq 1.$

Since $\mbox{trace}(C)=1$ then $C$ is similar to one of the following matrices $$\mbox{diag}(1,0,0), \mbox{diag}(-1,-1,0), \diag(1,1,-1).$$

Since $0,$ $1$ and $-1$ are not eigenvalues of $C,$ then $C,$ $I_n+C,$ $I_n-C$ are invertible.

We derive that:
$$3=\mbox{rank}(C)\leq \mbox{rank}(E)+\mbox{rank}(N)\leq \mbox{rank}(E)+1$$ and therefore $\mbox{rank}(E)\geq 2.$ Hence $E$ can not be similar to $\mbox{diag}(1,0,0).$

$$3=\mbox{rank}(I_3-C)\leq \mbox{rank}(I_3-E)+\mbox{rank}(-N)\leq \mbox{rank}(I_3-E)+1$$ and therefore $\mbox{rank}(I_3-E)\geq 2.$ Hence $E$ can not be similar to $\mbox{diag}(1,1,-1).$

$$3=\mbox{rank}(I_3+C)\leq \mbox{rank}(I_3+E)+\mbox{rank}(N)\leq \mbox{rank}(I_3+E)+1$$ and therefore $\mbox{rank}(I_3+E)\geq 2.$ Hence $E$ can not be similar to $\mbox{diag}(-1,-1,0).$

In conclusion the initial assumption is false and the conclusion of the theorem is true.
\end{proof}


\begin{thebibliography}{99}
\bibitem{AM}
A.~N.~Abyzov and I.~I. Mukhametgaliev, {\it On some matrix analogs of the little Fermat theorem}, Mat. Zametki, 101, 163--168, 2017.

\bibitem{B}
S.~Breaz, {\it Matrices over finite fields as sums of periodic and nilpotent elements}, Lin. Alg. \& Appl., 555, 92--97, 2018.

\bibitem{BCDM}
S.~Breaz, G.~C\u{a}lug\u{a}reanu, P.~Danchev and T.~Micu, {\it Nil-clean matrix rings}, Lin. Alg. \& Appl., 439, 3115--3119, 2013.

\bibitem{BM}
S.~Breaz, G.~C.~Modoi, {\it Nil clean companion matrices}, Lin. Alg. \& Appl., 489, 50--60, 2016.

\bibitem{BrMe}
S.~Breaz, S.~Megie\c san, {\it Nonderogatory matrices as sums of idempotent and nilpotent matrices}, Lin. Alg. \& Appl., 605, 239--248, 2020.


\bibitem{DGGL}
P.~Danchev, E.~García, and M.~Gomez Lozano, {\it Decompositions of matrices into potent and square-zero matrices}, Internat J. of Algebra and Comp. 32, No. 02, 251--263, 2022.


\bibitem{Diesl}
A.~J.~Diesl, {\it Nil-clean rings}, J. of Algebra, 383, 197--211, 2013.

\bibitem{KLZ}
M.~T.~Ko\c san, T-K.~Lee, Y.~Zhou, {\it When is every matrix over a division ring a sum of an
idempotent and a nilpotent?}, Linear Algebra Appl., 450, 7--12, 2014.

\bibitem{KWZ}
T.~Ko\c san, Z.~Wang, Y.~Zhou, {\it Nil-clean and strongly nil-clean rings}, J. Pure Appl. Algebra,
220, 633--646, 2016

\bibitem{N}
W.~K.~Nicholson {\it Lifting idempotents and exchange rings}, Trans. of the American Math Soc, 229, 269--278, 1977.


\bibitem{Pojar}
A.~Pojar, {\it When is the set of $n\times n$ companion matrices over $\F_p$ almost $m$-torsion $p$-clean?}, Automat. Comput. Appl. Math., 29, 39--43, 2023.

\bibitem{PComp}

A.~Pojar, {Companion matrices as sums of $p$-potent and nilpotent matrices}, preprint.

\bibitem{S}
J.~\v{S}ter, {\it On expressing matrices over $\Z_2$ as the sum of an idempotent and a nilpotent}, Lin. Alg. \& Appl., 544, 339--349, 2018



\end{thebibliography}
\end{document}